\numberwithin{equation}{section}
\theoremstyle{plain}
\newtheorem{thm}{Theorem}[section]
\newtheorem*{mainthm}{Main Theorem}
\newtheorem{lem}[thm]{Lemma}
\newtheorem{cor}[thm]{Corollary}
\newtheorem{prop}[thm]{Proposition}
\newtheorem{obs}[thm]{Observation}
\theoremstyle{definition}
\newtheorem{exmpl}[thm]{Example}
\theoremstyle{remark}
\newcommand{\N}{\mathbb{N}}
\newcommand{\Z}{\mathbb{Z}}
\newcommand{\defeq}{\mathrel{\mathop{:}}=}
\newcommand{\gen}[1]{\left\langle #1\right\rangle}
\newcommand{\abs}[1]{\left\lvert#1\right\rvert}
\newcommand{\realize}[1]{\left\lvert#1\right\rvert}
\newcommand{\CAT}{\mathrm{CAT}}
\newcommand{\lab}{c}
\newcommand{\len}{\ell}
\newcommand{\Graph}{\Gamma}
\newcommand{\ExtVert}{\hat{V}}
\newcommand{\ExtGraph}{\Delta}
\newcommand{\ExtFlag}{L}
\newcommand{\GProd}[1]{G(#1)}
\newcommand{\qgen}[1]{[#1]}
\newcommand{\fin}{\mathrm{fin}}
\renewcommand{\inf}{\mathrm{inf}}
\newcommand{\Cliques}{\mathcal{S}}
\newcommand{\CliqueCosets}{\mathcal{T}}
\newcommand{\Weirds}{\mathcal{P}}
\newcommand{\WeirdCosets}{\mathcal{Q}}
\newcommand{\Chamber}{K}
\newcommand{\Complex}{X}
\newcommand{\WeirdComplex}{Y}
\newcommand{\lk}{\operatorname{lk}}
\newcommand{\deslk}{\lk^{\downarrow}}
\newcommand{\ulk}{\operatorname{ulk}}
\newcommand{\dlk}{\operatorname{dlk}}
\newcommand{\desulk}{\ulk^{\downarrow}}
\newcommand{\desdlk}{\dlk^{\downarrow}}
\title{$\mathbf{CAT(0)}$ cubical complexes\\for graph products\\of finitely generated abelian groups}
\author{Kim Ruane and Stefan Witzel}
\date{November 2013}
\begin{document}

\maketitle

\begin{abstract}
\noindent{}\textsc{Abstract.} We show that every finite graph product of finitely generated abelian groups acts properly and cocompactly on a $\CAT(0)$ cubical complex. The complex generalizes (up to subdivision) the Salvetti complex of a right-angled Artin group and the Coxeter complex of a right-angled Coxeter group. In the right-angled Artin group case it is related to the embedding into a right-angled Coxeter group described by Davis and Januszkiewicz. We compare the approaches and also adapt the argument that the action extends to finite index supergroup that is a graph product of finite groups.
\end{abstract}

Let $\Graph$ be a (simplicial) graph in which every vertex is labeled by a finitely generated abelian group. The graph product $\GProd\Graph$ is the free product of all these vertex groups modulo the relations that the elements of two of them commute if they are connected by an edge. The purpose of this article is to show:

\begin{mainthm}
Let $\Gamma$ be a finite graph with vertices labeled by finitely generated abelian groups. There is a $\CAT(0)$ cubical complex on which $\GProd\Graph$ acts properly and cocompactly.
\end{mainthm}

Every graph product of finitely generated abelian groups can also be written as a graph product of cyclic groups and we will restrict to that case.  If all of the vertex groups are infinite cyclic, then the resulting graph product is a right-angled Artin group.  If all vertex groups are $\Z/2\Z$, then the resulting graph product is a right-angled Coxeter group.  For both of these classes, there is a standard construction of a locally finite $\CAT(0)$ cube complex on which the graph product acts geometrically (properly and cocompactly), see for example \cite[2.5, 2.6]{charney07}. For general cyclic vertex groups Davis \cite[Section~5]{davis98} constructs a right-angled building on which the graph product acts, extending the construction for right-angled Coxeter groups.  However, this complex is locally finite (and the action is proper) only if all of the vertex groups are finite. Our construction treats the infinite cyclic groups as in the right-angled Artin case. In fact, it would work in the same way for graph products of finite groups and infinite cyclic groups at the expense of making the notation more cumbersome.

A different approach for right-angled Artin groups is to embed every infinite cyclic group into an infinite dihedral group. This was used by Davis and Januszkiewicz \cite{davjan00} to show that right-angled Artin groups embed as finite index subgroups into right-angled Coxeter groups. As we will see our construction is closely related to theirs. The approach is from the opposite direction, however: we first verify the group theoretic connection and deduce the relation between the spaces from it.

Our construction also gives an alternative proof that a graph product of finitely generated abelian groups is virtually cocompact special. This was originally shown by Kim \cite[Theorem~3(2)]{kim12}.

\medskip

The paper is organized as follows. In Section~\ref{sec:basics} we recall some facts about graph products of groups and set notation. The cubical complex is constructed in Section~\ref{sec:complex} and shown to be contractible and $\CAT(0)$ in Section~\ref{sec:contractibility}. In Section~\ref{sec:comparison} we make the connection to the Davis--Januszkiewicz construction.

\medskip

\textbf{Acknowledgements.} The work for this article was started while the second author was visiting the first author. The second author is grateful for the hospitality of Tufts University and for financial support through the DAAD which partially funded the visit. He also gratefully acknowledges support through the SFB 878 in Münster. The authors are grateful to S.\ Kim for pointing out the connection with his work on virtually cocompact special groups.

\section{Graph products of cyclic groups}
\label{sec:basics}

As mentioned in the introduction we may restrict ourselves to graph products of cyclic groups and we introduce notation accordingly. Let $\Graph$ be a simplicial graph with vertex set $V$ and edge set $E$. Let $\lab \colon V \to \N \cup \{\infty\}$ be a vertex-labeling. We define the graph product of $\Graph$ to be the group $\GProd\Graph$ with presentation
\[
\GProd\Graph \defeq \gen{s \in V \mid s^{\lab(s)} = 1 \text{ for } s \in V, [s,t]=1 \text{ for } \{s,t\} \in E} \text{ .}
\]
Thus every vertex $s \in V$ is an element of order $\lab(s)$ in $\GProd\Graph$.

We will need a fact about general graph products which provides a solution to the word problem (with the elements of the vertex groups as generators). It was first proved by Green \cite{green90} and later reproved using geometric methods by Hsu and Wise \cite{hsuwis99}. We only describe it in our case of cyclic vertex groups.

Every element $g \in \GProd\Graph$ can be written as a word $(s_1^{e_1},\ldots,s_k^{e_k})$ with each $s_i$ one of the vertices of $\Graph$ (not necessarily distinct) and $e_i \in \Z$. By that we mean that the product $s_1^{e_1} \cdots s_k^{e_k}$ in $\GProd\Graph$ equals $g$. The following operations on words clearly do not change the element of $\GProd\Graph$ that the word describes:
\begin{enumerate}
\item Remove the letter $1(=s_i^{0})$.\label{item:throw_away}
\item Replace two consecutive letters $s_i^{e_i}$ and $s_{i+1}^{e_{i+1}}$ which are powers of the same generator $s_i=s_{i+1}$ by $s_i^{e_i+e_{i+1}}$.\label{item:merge}
\item Replace two consecutive letters $s_i^{e_i}, s_{i+1}^{e_{i+1}}$ such that $s_i$ and $s_{i+1}$ are connected by an edge in $\Gamma$ by  $s_{i+1}^{e_{i+1}},s_i^{e_i}$.\label{item:interchange}
\end{enumerate}
A word that cannot be shortened using these operations is called \emph{reduced}. The following is \cite[Theorem~2.5]{hsuwis99}.

\begin{thm}
A reduced word describes the identity element if and only if it is empty.
\end{thm}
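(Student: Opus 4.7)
The forward direction is immediate: the empty word represents the identity. For the converse, my plan is to establish a stronger \emph{Normal Form Theorem} --- that any two reduced words representing the same element of $\GProd\Graph$ are related by a finite sequence of operations of type (iii) (commuting-letter swaps) only. This implies the stated theorem at once, since the empty word is one reduced representative of the identity, and any word obtained from it by type-(iii) swaps is still empty.

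The proof would proceed by induction on the length $k$. Given reduced words $w = (s_1^{e_1},\ldots,s_k^{e_k})$ and $w' = (t_1^{f_1},\ldots,t_k^{f_k})$ representing the same element $g$, the goal is to shuffle $w$ so that its first letter matches $t_1^{f_1}$, then strip off the common first letter and apply the inductive hypothesis to the length-$(k-1)$ suffixes. A separate, simpler length induction --- comparing any two reduced words via the abelianization-like retractions described below --- shows that two reduced words for the same element must have the same length, so matching the first letters is not ambiguous.

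The heart of the argument is the following \emph{Shuffling Lemma}: if $t^{f}$ is the first letter of some reduced expression for $g$ and $w = (s_1^{e_1},\ldots,s_k^{e_k})$ is another reduced expression for $g$, then there is an index $i$ with $s_i = t$ such that $s_j$ is connected to $t$ in $\Graph$ for all $j < i$. Consequently, applying (iii) repeatedly slides $s_i^{e_i}$ to the front of $w$, and reducedness, together with the length-one case --- proved using the retraction $\pi_t \colon \GProd\Graph \to \gen{t}$ sending $t \mapsto t$ and every other vertex of $\Graph$ to $1$ (well-defined because every defining relation is trivially respected) --- forces $e_i \equiv f \pmod{\lab(t)}$, so the first letters indeed agree after shuffling.

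The main obstacle is proving the Shuffling Lemma itself, which really captures why the defining relations of a graph product interact nicely. I would argue contrapositively: suppose no such $i$ exists, so that every occurrence of $t$ in $w$ is blocked on its left by some letter whose vertex is not adjacent to $t$ in $\Graph$. A careful combinatorial argument, in the spirit of Viennot's theory of heaps of pieces for trace monoids, then lets one construct from $w$ a reduced expression for $g$ whose first letter is not a power of $t$ and whose first letter cannot be shuffled to coincide with $t^f$; this contradicts the assumption that $t^f$ is a first letter of $g$. Making this blocking-versus-shuffling dichotomy precise --- and verifying that it behaves correctly when exponents can be added modulo $\lab(t)$ rather than only in $\Z$ --- is the technical crux of the normal form theorem for graph products of cyclic groups.
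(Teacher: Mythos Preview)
The paper does not actually prove this theorem; it is quoted as \cite[Theorem~2.5]{hsuwis99}, and the normal form statement (Corollary~\ref{cor:green_strong}) is then derived from it by a short induction on word length. Your proposal reverses that logical order --- prove the full normal form theorem directly and read off the identity case --- which is a legitimate strategy (and close in spirit to Green's original route), but it obliges you to supply the hard step yourself rather than import it.

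The genuine gap is in your sketch of the Shuffling Lemma. Your contrapositive concludes: having built from $w$ a reduced expression for $g$ whose first letter is not a power of $t$ and cannot be shuffled to one, ``this contradicts the assumption that $t^f$ is a first letter of $g$.'' But that assumption says only that \emph{some} reduced word for $g$ begins with $t^f$; the coexistence of another reduced word that does not, and cannot be shuffled to, is a contradiction only if one already knows that all reduced words for $g$ are shuffle-equivalent --- precisely the normal form theorem under proof. Invoking heaps of pieces does not close the loop: in trace monoids equality is \emph{defined} by commutation swaps, whereas here equality is given by the group presentation and the whole task is to show the two notions agree on reduced words. Your retraction $\pi_t$ certifies that $t$ occurs in every reduced word for $g$ with the correct total exponent, but not that some occurrence can be moved to the front past non-adjacent letters. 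To make the argument non-circular you need an external witness --- an action of $\GProd\Graph$ on a set or complex that detects first letters, a confluent rewriting system, or a van~der~Waerden-style normal-form construction --- and that is exactly the content you have labelled ``the technical crux'' without supplying it.
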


By induction on the word length one obtains the seemingly stronger version formulated by Green \cite[Theorem~3.9]{green90}.

\begin{cor}
\label{cor:green_strong}
Two words describe the same element if and only if they can be transformed into a common word using only the operations \eqref{item:throw_away} to \eqref{item:interchange}.
\end{cor}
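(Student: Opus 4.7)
The plan is to induct on $|u| + |v|$, where $u$ and $v$ are the two given words representing the same element $g \in \GProd{\Graph}$. The base case $|u| + |v| = 0$ is trivial. For the inductive step, I distinguish two situations. First, if $u$ is not \emph{reduced}, in the sense that some sequence of interchanges (iii) followed by an application of (i) or (ii) strictly shortens it, I perform that shortening to obtain $\tilde{u}$ with $|\tilde{u}| < |u|$ and apply the induction hypothesis to the pair $(\tilde{u}, v)$, which has smaller total length; symmetrically for $v$. Thus the essential case is when both $u$ and $v$ are reduced.

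In the reduced case, let $s^{e}$ be the first letter of $u$. The key claim is that $v$ contains a letter $s^{e}$ at some position $j$ such that $s$ is adjacent in $\Graph$ to each vertex appearing at positions $1, \ldots, j-1$ of $v$. Granted this, operation (iii) brings the letter to the front of $v$, so both words now begin with $s^{e}$. Stripping that common first letter yields reduced tails $u'$ and $v'$ that represent $s^{-e} g$ and have smaller total length, so by the induction hypothesis they can be transformed into a common word via (i)--(iii). Prepending $s^{e}$ produces the desired common word for $u$ and $v$.

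The key claim is obtained by forming $u v^{-1}$, which represents the identity, and applying the theorem to obtain a reduction of it to the empty word via operations (i)--(iii). I then track the initial letter $s^{e}$ through this reduction: reducedness of $u$ prevents it from merging with a later letter of $u$, so it must eventually combine with a letter of $v^{-1}$ via (ii) and be annihilated by (i). The interchanges (iii) used to bring those two letters adjacent certify that $s$ commutes with all vertices between them, and the fact that the merge yields the identity forces matching exponents. The main obstacle is making this tracking precise, since intermediate merges elsewhere in the reduction can coalesce descendants of $s^{e}$ in nontrivial ways; I expect to handle this by a shuffling lemma that reorders the reduction sequence so that all moves touching (descendants of) the initial $s^{e}$ happen first, reducing the situation to the clean picture above.
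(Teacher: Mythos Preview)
Your overall plan—induct on length, reduce to the case where both words are reduced, then peel off a common first letter—matches the paper's one-line ``by induction on the word length.'' The problem is your argument for the key claim. Tracking the initial $s^e$ through an arbitrary reduction of $uv^{-1}$ is delicate: once merges have occurred elsewhere, the letters separating $s^e$ from its eventual partner need not be the original ones, so the assertion that reducedness of $u$ prevents $s^e$ from merging with a later letter of $u$ is not obviously stable along the whole reduction. The shuffling lemma you propose would amount to a confluence property for the rewriting system; that can certainly be proved, but it is more work than the corollary itself, and as written it is a hope rather than an argument.

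The key claim follows directly from the induction hypothesis, with no tracking needed, provided you induct on $\min(|u|,|v|)$ rather than $|u|+|v|$ (and record as part of the inductive statement that reduced words for the same element have equal length). Say $|u|\le|v|$ and $u=s^e u'$. If $w\defeq s^{-e}v$ were reduced, then $u'$ and $w$ would be reduced words for $s^{-e}g$ with $\min(|u'|,|w|)=|u|-1$, so by induction $|u'|=|w|$, i.e.\ $|u|-1=|v|+1$, contradicting $|u|\le|v|$. Hence $w$ is not reduced; since $v$ is, the only possible shortening of $w$ uses the prepended letter, so some (iii) moves bring a letter $s^{e'}$ from $v$ to the front, i.e.\ $v\to_{(iii)}^* s^{e'}v_1$. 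If $e'\ne e$ then $s^{e'-e}v_1$ is a reduced word of length $|v|$ for $s^{-e}g$, and the same comparison with $u'$ forces $|u|-1=|v|$, again impossible. So $e'=e$, which is exactly your key claim, and the induction closes as you described. The Theorem is invoked only in the base case.
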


In particular a word has minimal length (among those describing the corresponding element) if and only if it is reduced. For our purposes, a slightly different measurement of length will be useful. It corresponds to the generating set of $\GProd\Graph$ consisting of all the elements of the finite vertex groups, but only one generator for each infinite cyclic subgroup.

So if we partition the vertex set of $\Graph$ into
\[
V_\fin=\{s \in V \mid \lab(s) < \infty\} \quad \text{and} \quad V_\inf=\{s \in V \mid \lab(s) = \infty\}
\]
the length of a word is given by
\[
\len(s_1^{e_1},\ldots,s_k^{e_k}) = \sum_{s_i \in V_\inf} \abs{e_i} + \sum_{s_i \in V_\fin} 1 \text{ .}
\]
It follows from Corollary~\ref{cor:green_strong} that every reduced word has minimal length with respect to this length function though the converse is not true. The length of an element of $\GProd\Graph$ is defined to be the minimal length of a word representing it (for example a reduced word).

We say that an element $g \in \GProd\Graph$ \emph{ends with} $s \in V$ if there is a reduced word $(s_1^{e_1},\ldots,s_k^{e_k})$ describing $g$ with $s = s_k$. Another consequence of Corollary~\ref{cor:green_strong} is:

\begin{cor}
\label{cor:endletters_commute}
If $g$ ends with $s$ and ends with $t$ then $s$ and $t$ commute (that is, are connected by an edge).
\end{cor}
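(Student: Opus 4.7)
The plan is to combine Corollary~\ref{cor:green_strong} with the fact that reduced words have minimal letter count to reduce the problem to pure swapping, and then to track the two candidate last letters through adjacent transpositions.

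Pick reduced words $w_1 = (s_1^{e_1}, \ldots, s_k^{e_k})$ with $s_k = s$ and $w_2 = (r_1^{f_1}, \ldots, r_m^{f_m})$ with $r_m = t$ representing $g$. By Corollary~\ref{cor:green_strong} they can be transformed into a common word $w$ via the operations \eqref{item:throw_away}--\eqref{item:interchange}. Operations \eqref{item:throw_away} and \eqref{item:merge} strictly decrease the number of letters, while \eqref{item:interchange} preserves it, so the total letter count is monotone non-increasing along any such transformation. Since $w_1$ and $w_2$ are reduced, they both realise the minimum letter count among words representing $g$, and $w$ also represents $g$, so all three have equal letter count. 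Hence the transformations $w_1 \to w$ and $w_2 \to w$ use only swaps \eqref{item:interchange}; reversing the second yields a sequence of swaps taking $w_1$ directly to $w_2$.

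Under such a sequence of swaps, the multiset of letters is preserved and each letter has a well-defined trajectory. The case $s = t$ being immediate, assume $s \neq t$. Then the letter $t^{f_m}$ sits at some position $j' < k$ in $w_1$, hence to the left of $s^{e_k}$; whereas in $w_2$ the letter $s^{e_k}$ occupies some position $j < m$, hence to the left of $t^{f_m}$. Since each swap only exchanges adjacent letters, the relative order of two distinct letters can reverse only at the moment when they are adjacent and swapped by an application of \eqref{item:interchange}. Therefore at some intermediate step $t^{f_m}$ and $s^{e_k}$ are exchanged, which by the hypothesis of \eqref{item:interchange} requires $\{s,t\}$ to be an edge of $\Graph$, i.e.\ $s$ and $t$ commute.

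The crucial step is the first one---certifying that only swaps occur in transforming $w_1$ into $w_2$---and it rests on the equivalence between being reduced and having minimum letter count, which is extracted from Corollary~\ref{cor:green_strong}. Once that is in place, the rest is a clean combinatorial tracking of two letters whose relative order reverses under a sequence of adjacent transpositions.
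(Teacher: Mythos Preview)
Your proof is correct; the paper gives no explicit argument, merely citing the statement as a consequence of Corollary~\ref{cor:green_strong}, and your proof is precisely the natural unfolding of that claim. The reduction to swap-only transformations via the minimality of the letter count for reduced words, followed by tracking the two end-letters through a sequence of adjacent transpositions until their relative order flips, is the standard and intended argument.
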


\section{The complex}
\label{sec:complex}

From now on fix a finite graph $\Graph$ with labeling $\lab$ and let $G \defeq \GProd\Graph$ be the associated group. Let $V^\pm = V \cup V^{-1}$ denote the set of generators and their inverses of $\GProd\Graph$ and define $V_\inf^\pm$ accordingly. We define a new graph $\ExtGraph$ whose vertex set is $\ExtVert \defeq V_\fin \cup V_\inf^\pm$ and whose edges are given by pulling back the edges of $\Graph$ via the obvious projection $V \to \ExtVert$ (note that this means that $s$ and $s^{-1}$ are \emph{not} connected for $s \in V_\inf$).

For $s \in V^\pm$ we define the expression
\[
\qgen{s} \defeq \left\{
\begin{array}{ll}
\gen{s}\text{ ,} & s \in V_\fin^\pm\medskip\\
\{1,s\}\text{ ,} & s \in V_\inf^\pm \text{ .}
\end{array}
\right.
\]
We extend this expression to cliques (complete subgraphs) of $\ExtGraph$ by setting $\qgen{C} = \qgen{s_1}\cdots\qgen{s_k}$ (element-wise product) if $s_1,\ldots,s_k$ are the vertices of $C$. Note that the order of the product does not matter since all the $s_i$ commute. Moreover the clique $C$ can be recovered from $\qgen{C}$, its vertices being just $\qgen{C} \cap \ExtVert$.

Let $\Cliques$ be the poset of the $\qgen{C}$ where $C$ ranges over cliques of $\ExtGraph$ and let $\Chamber\defeq \realize{\Cliques}$ be its realization. That is, the vertices of $\Chamber$ are the sets $\qgen{C}$ and the simplices are flags of those, ordered by inclusion. The poset $\Cliques$ is covered by the intervals $[\qgen{\emptyset},\qgen{C}]$ which are boolean lattices. Therefore $\Chamber$ naturally carries a cubical structure in which intervals are cubes (see \cite[Proposition~A.38]{abrbro08}). The link of $\qgen{\emptyset}$ in this cubical structure is the flag complex of $\ExtGraph$.

Let $\CliqueCosets \defeq G\Cliques$ be the set of cosets of elements of $\Cliques$. This set is again ordered by inclusion and its realization $\Complex = \Complex(\Gamma) \defeq \realize{\CliqueCosets}$ is the space we are looking for. For the same reason as before $\Complex$ can be regarded as a cubical complex. Note that if $\Graph$ has no vertices labeled $\infty$, then $\Graph = \ExtGraph$ and $\Complex(\Graph)$ is just the usual coset-complex (the right-angled building from \cite[Section~5]{davis98}).

\begin{obs}
The subspace $\Chamber$ is a weak fundamental domain for the action of $G$ on $\Complex$.
\end{obs}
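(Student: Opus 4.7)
A weak fundamental domain means $G\cdot\Chamber=\Complex$, so I need to show that every simplex of $\Complex$ can be moved into $\Chamber$ by some element of $G$. A simplex is a chain $g_0\qgen{C_0}\subsetneq\cdots\subsetneq g_k\qgen{C_k}$ in $\CliqueCosets$, and I want to produce $g\in G$ such that each coset $gg_i\qgen{C_i}$ is of the form $\qgen{C_i'}$ for some clique $C_i'$ of $\ExtGraph$ -- that is exactly what it means for the translated chain to lie in $\Chamber$.

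\textbf{Key lemma (main obstacle).} The technical heart of the argument is the following: if a coset $h\qgen{C}\in\CliqueCosets$ contains the identity, then it is already equal to $\qgen{C'}$ for some clique $C'$ of $\ExtGraph$. The proof is a direct calculation using that $\qgen{C}=\prod_{s\in C}\qgen{s}$ with commuting factors. From $1\in h\qgen{C}$ one writes $h^{-1}=\prod_{s\in C}x_s$ with $x_s\in\qgen{s}$, and then analyses $x_s^{-1}\qgen{s}$ factor by factor: for $s\in V_\fin$ the set $\qgen{s}$ is a subgroup, so the factor remains $\qgen{s}$; for $s\in V_\inf$, either $x_s=1$, giving $\qgen{s}$, or $x_s=s$, giving $\{s^{-1},1\}=\qgen{s^{-1}}$. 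Because the edges of $\ExtGraph$ are pulled back from $\Graph$, replacing some of the infinite vertices $s$ by their inverses $s^{-1}$ preserves cliqueness, so the collection of remaining indexing vertices assembles into a clique $C'$ of $\ExtGraph$ with $h\qgen{C}=\qgen{C'}$. This is the step I expect to require the most care, since it is where the asymmetry between $V_\fin$ and $V_\inf$ (and the fact that $s,s^{-1}$ are separate vertices of $\ExtGraph$ for infinite $s$) has to be handled correctly.

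\textbf{Conclusion.} Given the chain, pick any $h\in g_0\qgen{C_0}$ (for instance $h=g_0\in g_0\qgen{\emptyset}\subseteq g_0\qgen{C_0}$) and act by $g:=h^{-1}$. Then $1\in gg_0\qgen{C_0}$, and since left multiplication preserves inclusions, $1\in gg_i\qgen{C_i}$ for every $i$. Applying the lemma to each vertex of the chain produces cliques $C_i'$ of $\ExtGraph$ with $gg_i\qgen{C_i}=\qgen{C_i'}$, and since left multiplication is an order-preserving bijection the translated chain $\qgen{C_0'}\subsetneq\cdots\subsetneq\qgen{C_k'}$ is a strictly increasing chain in $\Cliques$ and hence defines a simplex of $\Chamber$. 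This shows that the original simplex lies in $G\cdot\Chamber$, completing the proof.
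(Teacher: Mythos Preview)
The paper does not prove this observation at all; it is stated without argument and followed only by the remark that $\Complex$ can equivalently be described as $G\times\Chamber/{\sim}$. Your proof is correct and supplies exactly the details the paper omits: the point that needs checking is not that every \emph{vertex} of $\Complex$ is a $G$-translate of one in $\Chamber$ (that is immediate from $\CliqueCosets=G\Cliques$), but that an entire \emph{chain} can be translated simultaneously, and your key lemma handles this cleanly. The case analysis for $s\in V_\inf$ is right, and your observation that replacing some $s$ by $s^{-1}$ preserves cliqueness (because edges of $\ExtGraph$ are pulled back from $\Graph$, and $s,s^{-1}$ are never both in a clique) is the one nontrivial point.
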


Note that $\Complex$ can equivalently be described as $G \times \Chamber /{\sim}$ where $\sim$ is the equivalence relation generated by $(g,x) \sim (gt,x)$ if $t \in \gen{s}$ and $x \in [\gen{t},\gen{C}]$ for $t \in C \subseteq V_\fin$ and $(gs,x) \sim (gs^{-1},y)$ for $s \in V_\inf$ if the barycentric coordinates of $y$ can be obtained from those of $x$ by replacing $s^{-1}$ by $s$.

\begin{obs}
The complex $\Complex$ is locally finite and the action of $G$ is proper.
\end{obs}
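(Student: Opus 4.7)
The plan is to establish the two assertions separately. For local finiteness I would count cubes at a given vertex; for properness I would show vertex stabilizers are finite and then invoke the general fact that finite cell stabilizers on a locally finite complex give a proper action.

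For local finiteness, fix a vertex $v = g\qgen{C}$ of $\Complex$. A cube of $\Complex$ is a boolean interval $[g_1\qgen{C_1}, g_1\qgen{C_2}]$ in $\CliqueCosets$, so counting cubes through $v$ reduces to counting triples $(g_1, C_1, C_2)$ with $g_1\qgen{C_1} \subseteq v \subseteq g_1\qgen{C_2}$. Since $1 \in \qgen{C_1}$, the lower inclusion forces $g_1 \in v$, a finite set; since $1 \in \qgen{C}$, the upper inclusion forces $g \in g_1\qgen{C_2}$, again restricting $g_1$ to finitely many translates. The cliques $C_1 \subseteq C_2$ range over the finitely many cliques of $\ExtGraph$, so only finitely many cubes contain $v$. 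In particular the cube dimensions are bounded by the clique number of $\ExtGraph$.

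For properness, I would compute the stabilizer of the vertex $\qgen{C}$ directly: if $h\qgen{C} = \qgen{C}$ then $h = h \cdot 1 \in h\qgen{C} = \qgen{C}$, so the stabilizer is a subset of the finite set $\qgen{C}$. By conjugation every vertex stabilizer is finite. Combined with local finiteness this yields properness: any compact $K \subseteq \Complex$ is contained in a finite subcomplex $L$, and the set $\{g \in G : gK \cap K \neq \emptyset\}$ decomposes over the finitely many pairs of cells $(c,c')$ of $L$ into sets $\{g : gc = c'\}$, each either empty or a coset of the finite stabilizer of $c$, hence finite in total.

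The main bookkeeping subtlety is that $\qgen{C}$ is not a subgroup when $C$ meets $V_\inf^\pm$, so the ``coset'' $g\qgen{C}$ must be handled as a subset of $G$ rather than via ordinary coset arithmetic. The recurring observation that makes everything go through is simply that $1 \in \qgen{C}$ for every clique $C$, which converts each set-theoretic inclusion $g_1\qgen{C_1} \subseteq g_2\qgen{C_2}$ into an explicit finite constraint on the translating element.
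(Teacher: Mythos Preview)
Your argument is correct and follows exactly the idea the paper has in mind: the paper's own proof is the single sentence ``This essentially follows from the finiteness of the sets $\qgen{C}$,'' and your write-up is a careful unpacking of precisely that observation. One minor redundancy: once you know $g_1 \in v$ from the lower inclusion, the second constraint coming from $1 \in \qgen{C}$ is not needed for the finiteness count.
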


\begin{proof}
This essentially follows from the finiteness of the sets $\qgen{C}$.
\end{proof}

An important class of cubical complexes are the \emph{special} ones introduced by Haglund and Wise \cite{hagwis08}. There is also a notion of when the action of a group $G$ on a cubical complex $\Complex$ is special \cite[Definition~3.4]{hagwis10}, which implies that specialness of $\Complex$ is inherited by $G \backslash \Complex$ (and in fact by $H \backslash \Complex$ for every $H \le G$) \cite[Theorem~3.5]{hagwis10}. In view of the known cases for right-angled Artin and Coxeter groups \cite[Example~3.3~(ii)]{hagwis08} it is not surprising that our construction satisfies the necessary conditions:

\begin{obs}
\label{obs:special_action}
The action of $G$ on $\Complex$ is special.
\end{obs}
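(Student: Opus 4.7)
The plan is to verify the four Haglund--Wise hyperplane conditions (two-sidedness, no self-intersection, no self-osculation, no inter-osculation) in the form appropriate for a proper but non-free cubical action. As a preliminary step I would observe that every edge of $\Complex$ carries a well-defined \emph{type}: an edge $[g\qgen{C},g\qgen{C\cup\{v\}}]$ has type $v$ if $v \in V_\fin$ and type $\{v,v^{-1}\}$ if $v \in V_\inf$, and parallelism across a square visibly preserves this type. Hence every hyperplane inherits a type.

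Two-sidedness and the absence of self-intersection then follow at once. The poset $\CliqueCosets$ is graded by clique size and $G$ preserves this grading, so no element of $G$ can invert an edge. A square of $\Complex$ is a translate of $[\qgen{\emptyset},\qgen{\{a,b\}}]$ for an edge $\{a,b\}$ of $\ExtGraph$, and its two pairs of parallel edges carry the types of $a$ and $b$. Since $\ExtGraph$ contains no edge between $s$ and $s^{-1}$ for $s \in V_\inf$, these two types are distinct, ruling out self-intersection.

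For the osculation conditions I would work locally at a vertex $v = g\qgen{C}$. The link of $v$ splits as the join of an \emph{ascending} part (the flag complex of the induced subgraph of $\ExtGraph$ on the vertices adjacent to every element of $C$, with $C$ itself removed) and a \emph{descending} part (a complete multipartite complex with one part of size $\lab(s)$ for each $s \in C \cap V_\fin$ and one part of size $1$ for each $s \in V_\inf$ with $s$ or $s^{-1}$ in $C$). Using this join decomposition together with Corollary~\ref{cor:endletters_commute}, a short case analysis shows that any two edges at $v$ either span a square or have types whose hyperplanes cannot cross, and in neither case do they violate osculation.

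The main obstacle is equivariance. The stabilizer of $v$ equals $g\gen{C \cap V_\fin}g^{-1}$; it fixes every ascending edge (since its generators commute with each vertex of $\ExtGraph$ adjacent to $C$) but permutes the $\lab(s)$ descending $s$-edges $[gs^k\qgen{C\setminus\{s\}},v]$ for each $s \in C \cap V_\fin$. Tracing parallelism through the neighboring cubes shows that these $\lab(s)$ edges are dual to $\lab(s)$ \emph{distinct} hyperplanes, so the stabilizer permutes (rather than identifies) the corresponding hyperplanes. Consequently no self- or inter-osculation is introduced in the quotient $G\backslash\Complex$ and two-sidedness survives; this is exactly the equivariant version of the four conditions.
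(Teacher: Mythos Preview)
Your overall strategy is sound, but it diverges from the paper's in one essential point: the paper does not use your coarse ``type'' $s$ (or $\{s,s^{-1}\}$) but a strictly finer edge label.  It writes every edge uniquely as $g s^k\qgen{C}\le g s^k\qgen{C\cup\{s\}}$ with $g$ not ending in $s$ and labels it by $s^k$ (i.e.\ by the pair $(s,k)$).  This label, together with the natural orientation, is constant along walls, and from it all four conditions fall out in one line: edges with the same label can neither cross nor osculate, and edges labelled $s^k$ and $t^\ell$ cross only when $s,t$ commute, in which case they always span a square.  In particular the $\lab(s)$ descending $s$-edges at a vertex $g\qgen{C}$ with $s\in C\cap V_\fin$ carry the \emph{distinct} labels $s^0,\dots,s^{\lab(s)-1}$, so the ``main obstacle'' you isolate simply does not arise in the paper's argument.

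Your workaround via stabilisers can be made to work, but the logic of the last paragraph is not quite right.  Showing that the $\lab(s)$ edges at $v$ are dual to $\lab(s)$ distinct hyperplanes (which incidentally is easiest to see via the paper's finer label, or once $\Complex$ is known to be $\CAT(0)$) does not by itself rule out self-osculation in $G\backslash\Complex$: if some $g\in G$ carries one of these hyperplanes to another, their common image could still self-osculate at the image of $v$.  What actually matters is that the stabiliser $g\langle s\rangle g^{-1}$ acts \emph{transitively} on those $\lab(s)$ edges, so they descend to a \emph{single} edge in the quotient and therefore cannot witness any osculation.  With that correction your argument goes through; the paper's finer labelling simply encodes this transitivity from the outset and avoids the separate stabiliser discussion.
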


\begin{proof}
Every edge of $\Complex$ is of the form $gs^k\qgen{C} \le g s^k\qgen{C \cup \{s\}}$ with $k \in \Z$, $g$ not ending with $s$, and $s \in \ExtVert \setminus C$. So every edge is naturally oriented and labeled by some $s^k$. Orientation and label are preserved along walls and under the action of $G$. Edges with same label cannot cross and cannot osculate. Edges with label $s^k$ and $t^\ell$ can cross only if $s$ and $t$ commute in which case no two such edges can osculate. The conditions in \cite[Definition~3.4]{hagwis10} now follow.
\end{proof}

\section{Contractibility and $\mathbf{CAT(0)}$-ness}
\label{sec:contractibility}

One reason for the popularity of cubical complexes is the combination of the following theorems, see \cite[Theorem~II.5.20]{brihae} and \cite[Theorem~II.4.1]{brihae}.

\begin{thm}
\label{thm:gromov}
A finite dimensional cubical complex has non-positive curvature if and only if the link of each of its vertices is a flag complex.
\end{thm}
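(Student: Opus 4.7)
The plan is to prove the equivalence via the standard two-step reduction due to Gromov. First, non-positive curvature is a local condition: a geodesic space has curvature $\le 0$ if and only if every point admits a $\CAT(0)$ neighborhood. In a piecewise Euclidean cubical complex a small open neighborhood of any point $x$ is canonically isometric to an open Euclidean cone on the link $\lk(x)$, which inherits a piecewise spherical structure in which every edge has length $\pi/2$ (the \emph{all-right} spherical metric); for non-vertex points one additionally factors out a Euclidean factor coming from the interior of the cube $x$ lies in, which does not affect the curvature analysis. By Berestovskii's cone characterization, the open Euclidean cone on a piecewise spherical complex $Y$ is $\CAT(0)$ exactly when $Y$ is $\CAT(1)$. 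This reduces the theorem to the statement that an all-right piecewise spherical simplicial complex is $\CAT(1)$ if and only if the underlying combinatorial complex is flag, which is Gromov's lemma.

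For Gromov's lemma I would treat the two directions separately and induct on dimension. For the contrapositive of the ``only if'' direction, suppose there is a minimal clique $\{v_0,\ldots,v_n\}$ of the link that fails to span a simplex; using the existing proper faces I would explicitly assemble a closed path consisting of geodesic segments connecting the $v_i$ through simplices that do exist, and compute that its total length is strictly less than $2\pi$. A local analysis at each $v_i$, using the induction hypothesis applied to $\lk(v_i)$, shows that this path is in fact a local geodesic, contradicting $\CAT(1)$. For the ``if'' direction, assuming flagness, I would use the criterion that an all-right spherical complex is $\CAT(1)$ provided (a) every vertex link is $\CAT(1)$, which holds by induction since vertex links of flag complexes are again flag, and (b) there is no closed geodesic of length less than $2\pi$, which I would exclude by showing that the vertices through which any candidate short closed geodesic passes would form a clique, hence by the flag hypothesis span a simplex inside which a strictly shorter competing path exists.

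The main obstacle is the careful analysis of geodesics at a vertex, needed both to produce the shortcut in the ``only if'' direction and to exclude short closed geodesics in the ``if'' direction. The essential technical tool is that a geodesic passing through a vertex $v$ of a piecewise spherical complex projects locally to a geodesic of length at most $\pi$ in $\lk(v)$, which reduces geometric questions about geodesics in the complex to geometric questions about geodesics in its lower-dimensional links. Once this projection principle is established, the induction combines cleanly with the combinatorial flag condition on both sides of the equivalence, and the full theorem follows from the cone reduction together with the observation that in a finite-dimensional cube complex there are only finitely many isometry types of local models, so the local-to-global theorem for $\CAT(0)$ upper curvature bounds applies without incident.
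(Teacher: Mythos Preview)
The paper does not prove this theorem at all: it is quoted as a standard result, with a reference to \cite[Theorem~II.5.20]{brihae}, and is then used as a black box together with the Cartan--Hadamard theorem to deduce that $\Complex$ is $\CAT(0)$ once contractibility and the flag condition on links have been checked. So there is no ``paper's own proof'' to compare your proposal against.

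That said, your outline is the standard proof one finds in Bridson--Haefliger: reduce non-positive curvature to the links being $\CAT(1)$ via Berestovskii's cone theorem, and then prove Gromov's lemma for all-right spherical complexes by induction on dimension. One point worth tightening in your sketch of the ``if'' direction: a closed geodesic in an all-right spherical complex need not pass through any vertices, so the phrase ``the vertices through which any candidate short closed geodesic passes would form a clique'' is not quite the argument. The usual route is to pick a vertex $v$ within distance $\pi/2$ of some point of the putative short closed geodesic $\gamma$, use the inductive $\CAT(1)$ hypothesis on $\lk(v)$ to show that $d(v,\cdot)$ is convex along $\gamma$, conclude that $\gamma$ stays in the closed star of $v$, and then project to $\lk(v)$ to obtain a contradiction with the induction hypothesis. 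With that adjustment your plan matches the cited reference.
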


\begin{thm}
\label{thm:cartan-hadamard}
A complete connected metric space that is non-positively curved and simply connected is $\CAT(0)$.
\end{thm}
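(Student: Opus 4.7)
The strategy is to upgrade the local $\CAT(0)$ condition (``non-positive curvature'') to a global one, using simple connectivity as the bridge. The argument splits naturally into two parts: first establish that any two points of the space are joined by a \emph{unique} geodesic, then verify the $\CAT(0)$ four-point inequality on every geodesic triangle.

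First I would address geodesic existence and uniqueness. Local $\CAT(0)$ provides, around every point, a convex ball in which geodesics exist, are unique, and vary continuously with their endpoints. Given $p$ and $q$, connectedness yields a continuous path $c$ between them, and compactness of $c$ lets me cover its image by finitely many such convex balls. Inside each ball I replace the corresponding sub-arc by its unique geodesic segment; iterated shortening (using completeness to pass to a limit) produces a \emph{local geodesic} from $p$ to $q$ in the homotopy class of $c$. The crucial lemma, whose proof uses convexity of the distance function along pairs of local geodesics, states that two local geodesics with the same endpoints that are homotopic rel endpoints must coincide. Simple connectivity collapses all homotopy classes, so one obtains a unique globally shortest geodesic between any two points.

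Second I would prove the $\CAT(0)$ inequality. Given a geodesic triangle $\Delta(x,y,z)$ with comparison triangle $\bar\Delta$ in $\R^2$, I want to show that corresponding distances in $\Delta$ are no larger than in $\bar\Delta$. Subdividing $\Delta$ by the midpoints of its sides gives four smaller triangles, and repeated subdivision produces triangles small enough to lie inside a single convex $\CAT(0)$ ball, where the desired inequality holds by hypothesis. A patching/majorization step (e.g.\ Reshetnyak's majorization or Alexandrov's patching argument) then assembles the local inequalities into a global one, using convexity of the comparison data in $\R^2$ and the uniqueness of geodesics established in step one to ensure that the patched comparison map is well defined and distance non-increasing.

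The main obstacle, I expect, is the local-to-global passage for geodesics: showing that simple connectivity actually forces two local geodesics in the same homotopy class to agree. This calls for a homotopy-of-homotopies construction, with a Lebesgue-number argument keeping every intermediate stage inside convex charts and convexity of the distance function along pairs of local geodesics preventing bifurcation. Once uniqueness is in hand, the patching step for the inequality is largely convex-geometric bookkeeping, and combining the two parts yields the theorem.
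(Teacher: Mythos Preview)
The paper does not prove this theorem at all: it is stated as a known result with a reference to \cite[Theorem~II.4.1]{brihae} and used as a black box. So there is no ``paper's own proof'' to compare against beyond that citation.

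Your outline is a faithful sketch of the standard Cartan--Hadamard argument as presented in Bridson--Haefliger, and the overall strategy is sound. One remark on the second step: the usual route is not repeated midpoint subdivision of the triangle followed by Reshetnyak majorization, but rather a direct application of Alexandrov's patching lemma (gluing two comparison triangles along a common side) together with a continuity argument in the side lengths. Once you have established that geodesics are unique and vary continuously with endpoints, you can subdivide a given triangle by a cevian into two triangles whose perimeters are strictly smaller, apply induction on size until each piece lies in a convex $\CAT(0)$ ball, and then use Alexandrov's lemma to reassemble. Your version via midpoint subdivision and majorization would also work, but carries more bookkeeping than necessary. In any case, since the paper simply imports the theorem from the literature, any correct proof suffices, and your sketch is adequate as an outline.
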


Using both results, we only have to show that $\Complex$ is simply connected and has flag complexes as links to conclude that $\Complex$ is $\CAT(0)$. However, in our case showing that $\Complex$ is simply connected is not significantly easier than showing it to be contractible. We will therefore directly show:

\begin{thm}
\label{thm:contractible}
$\Complex$ is contractible.
\end{thm}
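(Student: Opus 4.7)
The plan is to argue by a Bestvina--Brady style discrete Morse argument using the length function $\len$. For a vertex $v = g\qgen{C}$ of $\Complex$ define its height by $h(v) \defeq \min_{x \in g\qgen{C}} \len(x)$, and let $\Complex_n$ be the subcomplex spanned by vertices with $h \le n$. Since every cube of $\Complex$ is an interval $[g\qgen{C'}, g\qgen{C}]$ and its bottom vertex $g\qgen{C'}$ realizes the minimum height in the cube, each $\Complex_n$ is a genuine subcomplex and $\Complex = \bigcup_n \Complex_n$. The base $\Complex_0$ contains $\Chamber = \realize{\Cliques}$, which is a cone on $\qgen{\emptyset}$ and hence contractible, so it suffices to show that each inclusion $\Complex_{n-1} \hookrightarrow \Complex_n$ is a homotopy equivalence.

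The standard way to prove this is to attach the closed stars of the new vertices one at a time: for each vertex $v$ with $h(v) = n$, the closed star $\mathrm{st}(v)$ is a cone on the link of $v$, and $\mathrm{st}(v) \cap \Complex_{n-1}$ is by definition the descending link $\deslk(v)$. So the inclusion $\Complex_{n-1} \hookrightarrow \Complex_{n-1} \cup \mathrm{st}(v)$ is a homotopy equivalence as soon as $\deslk(v)$ is contractible. Iterating over all vertices of height $n$ gives the inductive step, and the induction concludes contractibility of $\Complex$.

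The combinatorial heart of the argument is therefore the contractibility of the descending links. Given $v = g\qgen{C}$ of positive height, pick a minimal representative $w \in g\qgen{C}$ and let $D \subseteq \ExtVert$ be the set of letters with which $w$ ends; by Corollary~\ref{cor:endletters_commute}, $D$ is a clique of $\ExtGraph$. The descending neighbors of $v$ come in two flavours: enlarging $C$ by adjoining a generator $s \in D$ (which enables cancellation in the $\qgen{s}$-direction), and, for $s \in D \cap V_\fin$ already in $C$, translating inside $\gen{s}$ so as to produce a shorter representative. Bundling this data, one expects $\deslk(v)$ to be the order complex of a poset of cliques of $\ExtGraph$ contained in $D$, equipped with extra finite-group coordinates; this poset has $D$ itself as a maximum element, so its realization is a cone and hence contractible. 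The main obstacle will be to show that this description of $\deslk(v)$ is well defined independently of the chosen representative $w$ (distinct reduced words for the same element can end with different letters, though they all yield the same clique $D$ once read correctly), and to handle uniformly the mixed situation in which $C$ contains both finite- and infinite-order letters so that ``descending'' means different operations in different coordinate directions. Once this combinatorial computation is carried out, the Morse filtration delivers contractibility.
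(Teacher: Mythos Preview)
Your overall strategy---filter $\Complex$ by a word-length Morse function and show that descending links are contractible---is exactly what the paper does. But the specific height you chose does not work, and several of the assertions built on it are false.

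First, for $C' \subseteq C$ one has $g\qgen{C'} \subseteq g\qgen{C}$ as subsets of $G$, so $\min_{x \in g\qgen{C'}}\len(x) \ge \min_{x \in g\qgen{C}}\len(x)$; the \emph{bottom} vertex of a cube therefore realizes the \emph{maximum} of your $h$, not the minimum. More seriously, $h$ is not a Morse function: it is constant on many edges. For $s \in V_\inf$ the edge between $\{s\} = s\qgen{\emptyset}$ and $\{s,s^2\} = s\qgen{\{s\}}$ has $h = 1$ at both endpoints. Consequently your ``attach stars one at a time'' step breaks down: when two height-$n$ vertices are adjacent, attaching the closed star of one already drags in the other (and possibly vertices of greater height), so the gluing region is not the object you call $\deslk(v)$. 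Relatedly, down-neighbours $gh\qgen{C\setminus\{s\}}$ always satisfy $h(gh\qgen{C\setminus\{s\}}) \ge h(g\qgen{C})$, so your ``second flavour'' of descending neighbour (translating inside a finite $\gen{s}$) is never actually descending for your $h$.

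The paper repairs all of this by taking
\[
f(g\qgen{C}) \;=\; \bigl(\max \len(g\qgen{C}),\,-\#C\bigr)
\]
ordered lexicographically. With the \emph{maximum} length and this tie-breaker, $f$ has a unique maximum on every cube, so genuine Morse theory applies. The descending link then splits as a join of a descending up-link and a descending down-link; for $C \ne \emptyset$ the descending down-link is a simplex (one point per $s \in C$), while for $C = \emptyset$ and $g \ne 1$ the descending up-link is the simplex on the letters $g$ ends with. Your outline is recoverable once you make these changes, but as written the filtration and the link analysis both fail.
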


We will prove Theorem~\ref{thm:contractible} by building up from a vertex to the whole complex while taking care that contractibility is preserved at each step. The technical tool to do this is combinatorial Morse theory.

A \emph{Morse function} on an affine cell complex $\Complex$ is a map $f \colon \Complex^{(0)} \to \Z$ such that every cell $\sigma$ of $\Complex$ has a unique vertex in which $f$ attains its maximum. The \emph{descending link} $\deslk v$ of a vertex $v$ consists of those cofaces $\sigma$ for which $v$ is that vertex. We denote the sublevel set $f^{-1}((-\infty,n])$ by $X^{\le n}$. The Morse lemma in its most basic form --- which is good enough for us --- can be stated as follows.

\begin{lem}
\label{lem:morse_lemma}
If $X^{\le n-1}$ is contractible and for every vertex $v$ with $f(v)$ the descending link $\deslk v$ is contractible then $X^{\le n}$ is contractible.
\end{lem}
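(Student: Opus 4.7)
The plan is to build $X^{\le n}$ from $X^{\le n-1}$ by gluing in, for each vertex $v$ of level $n$, the closed descending star of $v$, and then to apply the gluing lemma for pushouts of cofibrations.

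First I would verify the decomposition. Any open cell $\sigma$ in $X^{\le n}\setminus X^{\le n-1}$ contains at least one vertex of $f$-value $n$; since $f$ is a Morse function, $\sigma$ has a unique $f$-maximal vertex $v$, and this $v$ must satisfy $f(v)=n$. Hence $\sigma$ is a coface of $v$ on which $v$ is maximal, i.e.\ $\sigma$ corresponds to a cell of $\deslk v$ coned off with apex $v$. Writing $\mathrm{dst}(v)$ for the resulting closed subcomplex (topologically the cone over $\deslk v$ with cone point $v$), one obtains
\[
X^{\le n} = X^{\le n-1} \cup \bigcup_{f(v)=n} \mathrm{dst}(v) \text{ .}
\]
A face of such a $\sigma$ not containing $v$ has all vertices of $f$-value strictly below $n$, so sits in $X^{\le n-1}$, and conversely $\deslk v \subseteq X^{\le n-1}$ by definition; thus $\mathrm{dst}(v) \cap X^{\le n-1} = \deslk v$. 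Descending stars of distinct level-$n$ vertices intersect only in $X^{\le n-1}$, since a single cell has at most one $f$-maximum.

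Now for each level-$n$ vertex $v$, the inclusion $\deslk v \hookrightarrow \mathrm{dst}(v)$ is a cellular subcomplex inclusion, hence a cofibration. The target is a cone, hence contractible, and the source is contractible by hypothesis, so the inclusion is a homotopy equivalence between cofibrant pairs. By the gluing lemma for pushouts along cofibrations, attaching $\mathrm{dst}(v)$ to $X^{\le n-1}$ along $\deslk v$ yields a space homotopy equivalent to $X^{\le n-1}$, and in particular contractible. Iterating this attachment one vertex at a time over the $v$ with $f(v)=n$ eventually builds all of $X^{\le n}$ while preserving contractibility. The only mildly technical point is the bookkeeping in the first step — identifying $\mathrm{dst}(v) \cap X^{\le n-1}$ with $\deslk v$ and ensuring different descending stars do not interfere; the gluing argument itself is standard.
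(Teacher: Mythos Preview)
The paper does not actually prove this lemma: it is quoted as the basic form of the combinatorial Morse lemma and used as a black box. Your argument is the standard one (essentially Bestvina--Brady), and it is correct in outline. One small imprecision: in an affine cell complex the closed descending star $\mathrm{dst}(v)$ is not literally the cone on $\deslk v$; rather, $\mathrm{dst}(v)$ is star-shaped at $v$ (hence contractible), and $\mathrm{dst}(v)\cap X^{\le n-1}$, the union of faces not containing $v$, is homotopy equivalent to $\deslk v$. Since the gluing lemma only needs contractibility of both pieces and that the inclusion is a cofibration, this does not affect your argument. If there may be infinitely many vertices at level $n$, it is cleaner to perform all the attachments simultaneously as a single pushout over the disjoint union $\coprod_v \mathrm{dst}(v)$ along $\coprod_v \bigl(\mathrm{dst}(v)\cap X^{\le n-1}\bigr)$, rather than ``one vertex at a time''; your observation that distinct descending stars meet only inside $X^{\le n-1}$ is exactly what makes this work.
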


Returning to our concrete setting, we first study the links of vertices in $\Complex$. Let $\ExtFlag$ denote the flag complex of $\ExtGraph$.

\begin{obs}
The link of a vertex of the form $g\qgen{\emptyset} = \{g\}$ is isomorphic to $\ExtFlag$. Indeed, the correspondence $C \mapsto g\qgen{C}$ is a bijection between the faces of $\ExtFlag$ and the cofaces of $g\qgen{\emptyset}$.
\end{obs}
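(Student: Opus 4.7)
The plan is to establish that $C \mapsto g\qgen{C}$ is an order-preserving bijection between cliques of $\ExtGraph$ (faces of $\ExtFlag$) and cofaces of $\{g\}$ in $\CliqueCosets$. Granted this, the stated isomorphism is immediate from the standard identification of a vertex's link in a cubical complex with the poset of its proper cofaces: each $k$-cube containing $\{g\}$ contributes a $(k-1)$-simplex, corresponding via the bijection to a $(k-1)$-simplex of $\ExtFlag$.

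Well-definedness holds because $1 \in \qgen{C}$ for every clique $C$, so $g \in g\qgen{C}$. For injectivity and order preservation in both directions, I would invoke the identity $\qgen{C} \cap \ExtVert = C$ already noted in the paper: from $g\qgen{C} \subseteq g\qgen{C'}$ one derives $\qgen{C} \subseteq \qgen{C'}$ by left-cancellation, and intersecting with $\ExtVert$ yields $C \subseteq C'$; the equality case gives injectivity.

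The real content is surjectivity: given a coface $h\qgen{D}$ of $\{g\}$, I need a clique $C$ with $h\qgen{D} = g\qgen{C}$. To do this, write $g = h x_1 \cdots x_k$ with $D = \{s_1, \ldots, s_k\}$ and $x_i \in \qgen{s_i}$, and use pairwise commutativity of the $s_i$ (which holds because $D$ is a clique of $\ExtGraph$) to rearrange
\[
h\qgen{D} = g \cdot (x_1^{-1}\qgen{s_1}) \cdots (x_k^{-1}\qgen{s_k}).
\]
For $s_i \in V_\fin$ the factor $x_i^{-1}\qgen{s_i}$ equals $\qgen{s_i}$ since $\qgen{s_i}$ is a subgroup; for $s_i \in V_\inf^\pm$, the two possible values of $x_i$ give $x_i^{-1}\qgen{s_i} = \qgen{s_i}$ or $\qgen{s_i^{-1}}$ respectively. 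Choosing $s_i' \in \{s_i, s_i^{-1}\}$ accordingly, the set $C \defeq \{s_1', \ldots, s_k'\}$ is a clique of $\ExtGraph$ because edges of $\ExtGraph$ are pulled back from $\Graph$ and therefore do not depend on sign choices; by construction $h\qgen{D} = g\qgen{C}$.

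The only obstacle worth flagging is tracking this sign-flipping for infinite-order generators, which is also precisely what motivates passing from $\Graph$ to $\ExtGraph$ (with $s$ and $s^{-1}$ included as separate, non-adjacent vertices for each $s \in V_\inf$).
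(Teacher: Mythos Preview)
Your argument is correct. The paper treats this observation as self-evident and gives no proof beyond the ``Indeed'' clause; you have supplied exactly the natural verification, with the surjectivity step (flipping signs on the $V_\inf$ factors to absorb the translation $x_i^{-1}$) being the only part requiring any care, and you handle it correctly.
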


The link of a general vertex is not much more complicated:

\begin{obs}
\label{obs:up-down-link}
Let $g\qgen{C}$ be a vertex of $\Complex$. The link decomposes as
\[
\lk g\qgen{C} = \ulk g\qgen{C} * \dlk g\qgen{C}
\]
into an up-link $\ulk g\qgen{C}$ with simplices $g\qgen{C'}$ with $C' \supsetneq C$, and a down-link $\dlk g\qgen{C}$ with simplices $gh \qgen{C'}$ with $C' \subsetneq C$, $h \in \qgen{C \setminus C'}$. The up-link is isomorphic to the link of $C$ in $\ExtFlag$. The down-link is isomorphic to the join of the sets $\qgen{s}, s \in C$.
\end{obs}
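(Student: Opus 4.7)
The plan is to analyze the cubical structure at $v \defeq g\qgen{C}$ by classifying the cubes containing $v$ and checking that each such cube splits as a product of an \emph{up-cube} (where $v$ is the minimum vertex) and a \emph{down-cube} (where $v$ is the maximum vertex). Once this product decomposition is in place, the join decomposition $\lk v = \ulk v * \dlk v$ is automatic, since in a Boolean interval the link of an interior vertex is always the join of the links in the upper and lower sub-intervals.

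For the up-part I would show that a cube with $v$ as its minimum has the form $[g\qgen{C}, g\qgen{C'}]$ for some clique $C' \supseteq C$ in $\ExtGraph$, and that the intermediate cosets in this interval are exactly the $g\qgen{C''}$ with $C \subseteq C'' \subseteq C'$. This makes the interval a Boolean lattice of rank $|C' \setminus C|$, and the poset of clique extensions $C' \supsetneq C$ ordered by inclusion is by definition $\lk_{\ExtFlag}(C)$, identifying the up-link.

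For the down-part I would show that a cube with $v$ as its maximum is an interval $[gh\qgen{C \setminus S}, g\qgen{C}]$ for some $S \subseteq C$ and $h \in \qgen{S}$. Using the direct-product decomposition $\qgen{S} = \prod_{s \in S} \qgen{s}$ — which is valid because $S$ is a clique (so the factors commute) and the factorization is unique by Corollary~\ref{cor:green_strong} — every such $h$ decomposes uniquely as $h = \prod_{s \in S} h_s$ with $h_s \in \qgen{s}$. The Boolean structure of the interval corresponds to subsets $S' \subseteq S$, so the interval is a $|S|$-cube. This identifies the down-link with the join ${*}_{s \in C} \qgen{s}$ via the bijection $(S, h) \leftrightarrow \{(s, h_s) : s \in S\}$.

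The main obstacle will be the technical verification that these are indeed all cubes through $v$ and that every down-cube parameter $(S,h)$ yields an honest cube of $\Complex$. The subtlety is that $\qgen{C}$ is in general only a subset of $G$, not a subgroup, so $v$ itself may admit multiple descriptions — for instance $g\qgen{s} = gs\qgen{s^{-1}}$ for $s \in V_\inf$, since both sides equal $\{g, gs\}$. Consequently a down-cube such as $[gs\qgen{\emptyset}, g\qgen{s}]$ coincides with the standard cube $[gs\qgen{\emptyset}, gs\qgen{s^{-1}}]$, and one has to use unique factorization in $\qgen{S}$ to confirm that all such relabellings are accounted for and that the resulting counts match the join ${*}_{s \in C} \qgen{s}$ without overcounting.
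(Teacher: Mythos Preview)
The paper states this result as an observation without supplying a proof, so there is nothing to compare against directly; your proposal correctly fills in what the paper leaves implicit. Your plan---splitting each cube through $v$ into an up-cube and a down-cube and reading off the join decomposition from the Boolean structure---is exactly the natural argument, and your identification of the up-link with $\lk_{\ExtFlag}(C)$ and of the down-link with ${*}_{s \in C}\qgen{s}$ via unique factorization in $\qgen{S}$ is right.

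One point worth making explicit as you write it up: the representation $v = g\qgen{C}$ is not unique (as you note, $g\qgen{s} = gs\qgen{s^{-1}}$ for $s \in V_\inf$), so the description of the up-link as ``all $g\qgen{C'}$ with $C' \supsetneq C$'' is tied to a chosen representative. You should check that the resulting up-link does not depend on this choice. It does not: replacing $s$ by $s^{-1}$ in $C$ (and $g$ by $gs$) induces a bijection of the cliques $C' \supsetneq C$ with the corresponding cliques in the other labelling, because $s$ and $s^{-1}$ have the same neighbours in $\ExtGraph$, and the associated cosets $g\qgen{C'}$ agree as subsets of $G$. This is the up-link analogue of the relabelling issue you already handled for the down-link.
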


\begin{figure}[thb]
\begin{center}
\includegraphics{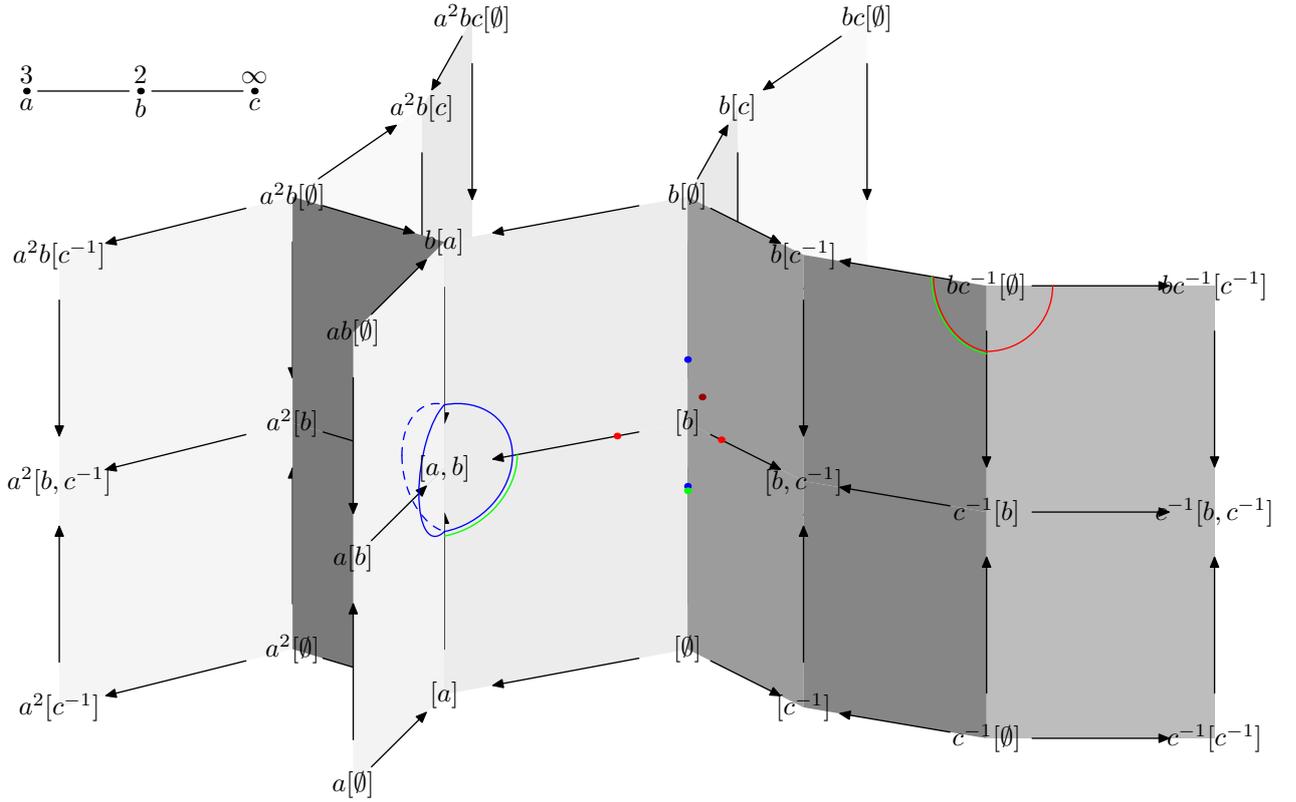}
\caption{Part of the complex $\Complex(\Gamma)$ for the graph $\Gamma$ indicated in the upper left corner. Some partial links are drawn. Down-links are blue, up-links are red. Descending links are green. The arrows indicate the poset relation. The ``bottom layer'' of the complex is the complex of the graph obtained from $\Gamma$ by deleting the vertex $b$, see Figure~\ref{fig:example_1d}.}
\label{fig:example_2d}
\end{center}
\end{figure}

Note that the \emph{simplices} in the link of a vertex correspond to \emph{vertices} in the ambient complex. This is a feature of cubical complexes, see Figure~\ref{fig:example_2d}. Observation~\ref{obs:up-down-link} implies in particular:

\begin{cor}
\label{cor:flag}
The link of every vertex of $\Complex$ is a flag complex.
\end{cor}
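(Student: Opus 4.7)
My plan is to exploit the join decomposition from Observation~\ref{obs:up-down-link} and use the fact that a join of flag complexes is a flag complex. Given a vertex $g\qgen{C}$, the observation tells us
\[
\lk g\qgen{C} = \ulk g\qgen{C} * \dlk g\qgen{C}\text{,}
\]
so it suffices to verify flagness of each factor separately.

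First I would handle the up-link. Since $\ulk g\qgen{C}$ is isomorphic to the link of the simplex $C$ in $\ExtFlag$, and $\ExtFlag$ is by construction the flag complex of $\ExtGraph$, I would invoke the standard fact that the link of a simplex in a flag complex is itself a flag complex (a collection of vertices of $\lk C$ spans a simplex in $\lk C$ precisely when, together with $C$, they span a simplex in the ambient flag complex, and this condition is pairwise).

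Next I would handle the down-link. By Observation~\ref{obs:up-down-link}, $\dlk g\qgen{C}$ is the join of the finite sets $\qgen{s}$ for $s \in C$, each regarded as a $0$-dimensional simplicial complex (its elements being vertices with no higher simplices within a single factor). A join of discrete sets is automatically a flag complex: a set of vertices spans a simplex in the join if and only if it contains at most one vertex from each factor, and this is a pairwise condition, so any pairwise-joined set of vertices is itself a simplex.

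Finally I would combine the two pieces using the elementary fact that the join of flag complexes is flag — a set of vertices of $K * L$ spans a simplex exactly when its intersections with $K$ and with $L$ each span a simplex, and both conditions are pairwise by the previous two steps. I do not anticipate any real obstacle here; the whole content lies in Observation~\ref{obs:up-down-link}, which has already identified both factors of the link as objects whose flagness is essentially immediate. The only mild subtlety is remembering that each $\qgen{s}$ contributes a discrete set of vertices (not a simplex), which is precisely why the join structure makes the down-link flag rather than a single big simplex.
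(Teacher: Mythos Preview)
Your proposal is correct and follows exactly the route the paper intends: the paper simply states that Observation~\ref{obs:up-down-link} implies the corollary without writing out the details, and you have supplied precisely those details (links in flag complexes are flag, joins of discrete sets are flag, joins of flag complexes are flag). There is nothing to add or change.
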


\begin{figure}[thb]
\begin{center}
\includegraphics{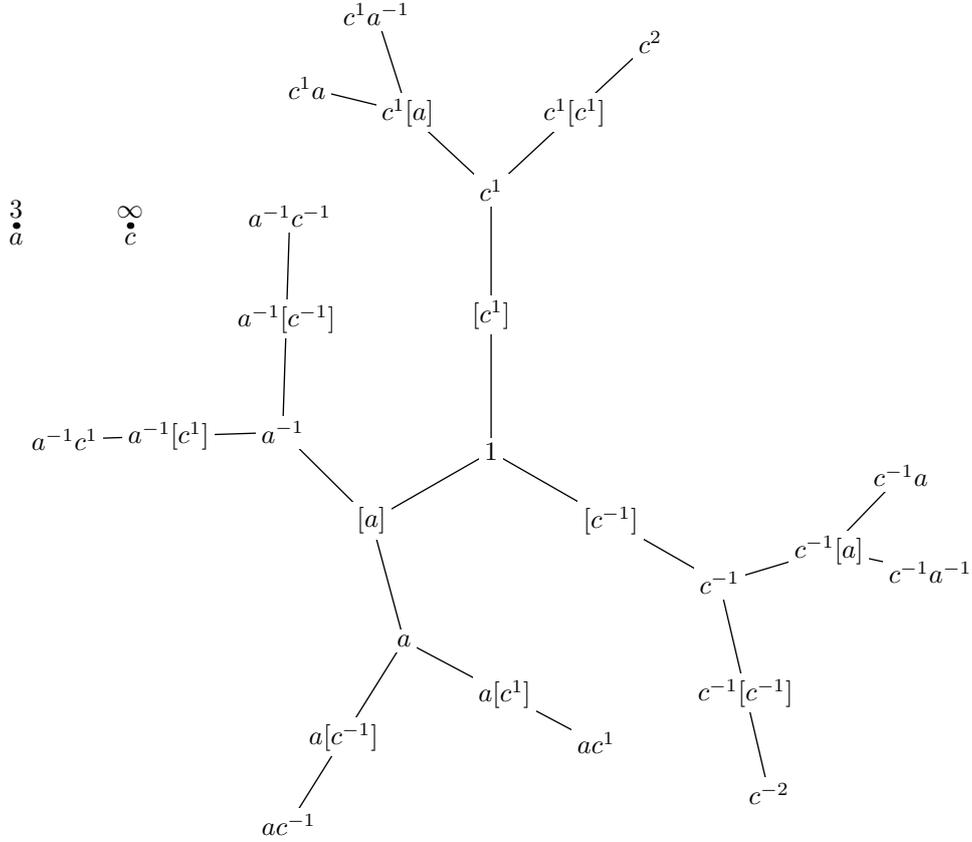}
\caption{Part of the complex $\Complex(\Gamma)$ for the graph $\Gamma$ indicated in the upper left corner.}
\label{fig:example_1d}
\end{center}
\end{figure}

We are now ready to start the proof of Theorem~\ref{thm:contractible}. The Morse function we will be using is
\begin{align*}
f \colon \Complex &\to \Z \times \Z\\
g\qgen{C} &\mapsto (\max \len(g\qgen{C}),-\#C)
\end{align*}
where $\len$ is length of an element as defined in Section~\ref{sec:basics} and the codomain is ordered lexicographically. This is formally not a Morse function as defined before since the codomain is not $\Z$. However, since $\#C$ is uniformly bounded throughout $\Complex$, the image of $f$ is order-isomorphic to $\Z$. The actual condition for being a Morse function is satisfied:

\begin{obs}
The function $f$ attains its maximum over the vertices of a cube of $\Complex$ in a unique vertex.
\end{obs}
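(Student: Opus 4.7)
The cube $\tau$ corresponds to an interval $[g\qgen{C_0},\,g\qgen{C_1}]$ in $\CliqueCosets$ for some $g \in G$ and cliques $C_0 \subseteq C_1$ of $\ExtGraph$, and its $2^{|C_1 \setminus C_0|}$ vertices are the cosets $g\qgen{C_0 \cup S}$ for $S \subseteq D \defeq C_1 \setminus C_0$. At each such vertex $f$ takes the value $(M(S),\,-\#C_0 - \#S)$, where $M(S) \defeq \max\len(g\qgen{C_0 \cup S})$. The first coordinate is monotone non-decreasing in $S$ (enlarging $S$ only enlarges the coset) while the second is strictly decreasing in $\#S$, so under the lexicographic order uniqueness of the maximum on $\tau$ reduces to showing that the upward closed set $\mathcal{M} \defeq \{S \subseteq D : M(S) = M(D)\}$ has a unique minimum element in $2^D$.

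The plan is to identify $\mathcal{M}$ by analysing the maximisers of $\len$ on $g\qgen{C_1}$. Every $h \in \qgen{C_1}$ has a unique decomposition $h = \prod_{s \in C_1} h_s$ with $h_s \in \qgen{s}$, because the subgroup of $G$ generated by $C_1$ is the direct product of the cyclic groups $\gen{s}$. Since all elements of $C_1$ commute, each $h_s$ can be moved freely past the other factors, and by Corollary~\ref{cor:green_strong} and Corollary~\ref{cor:endletters_commute} it only interacts with the rightmost $s$-block of a reduced expression for $g$. The main obstacle will be to promote this observation to the additive decomposition
\[
\len(gh) = \len(g) + \sum_{s \in C_1} \delta_s(h_s),
\]
where $\delta_s(h_s)$ denotes the length change produced by appending $h_s$ in isolation, via a case analysis on whether $s \in V_\fin$ or $s \in V_\inf^\pm$ and on the sign of the rightmost $s$-block of $g$. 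The same analysis will show $\max_{h_s \in \qgen{s}} \delta_s(h_s) \in \{0,+1\}$; call $s \in C_1$ \emph{good} when this maximum is $+1$ and let $G^* \subseteq C_1$ be the set of good letters, $B^* \defeq C_1 \setminus G^*$.

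From the additive formula, $M(D) = \len(g) + \#G^*$, and an element $h \in \qgen{C_1}$ attains this maximum precisely when $h_s \ne 1$ for every $s \in G^*$ and $h_s$ is non-cancelling for every $s \in B^*$. In particular, setting $h_s = 1$ on $B^*$ and any non-trivial value on $G^*$ yields a maximiser whose non-trivial factors are exactly the good letters, while every maximiser has non-trivial factor at each $s \in G^*$. Hence for any $S \subseteq D$ one has $M(S) = M(D)$ if and only if $G^* \cap D \subseteq S$, so $\mathcal{M}$ is the principal up-set generated by $G^* \cap D$, and $f$ attains its maximum on $\tau$ uniquely at the vertex $g\qgen{C_0 \cup (G^* \cap D)}$.
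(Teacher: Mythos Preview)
Your argument is correct and takes essentially the same route as the paper's: both locate the maximizing vertex by analysing how $\len(gh)$ varies with the individual factors $h_s \in \qgen{s}$, your set $G^*$ of ``good'' letters playing the role of the paper's set $C'$ of end-letters of a maximal-length element $gh$. Your additive decomposition $\len(gh)=\len(g)+\sum_s\delta_s(h_s)$ makes explicit what the paper compresses into one sentence, and in fact sidesteps a small slip there (the maximal-length element $gh\in g\qgen{C_2}$ need not be unique when some $s\in C_2\cap V_\fin$ has $\lab(s)\ge 3$, although the resulting vertex is).
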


\begin{proof}
Let $g[\qgen{C_1},\qgen{C_2}]$ be an arbitrary cube. Let $gh \in g\qgen{C_2}$ have maximal length. There is a unique such element because all the elements of $\qgen{C_2}$ commute. Let $C'$ be the set of all $s \in C_2$ that $gh$ ends with. Then $f$ attains its maximum over $g[\qgen{C_1},\qgen{C_2}]$ in $gh\qgen{C_1 \cup C'}$. Indeed, enlarging it drops the secondary height and reducing it is impossible while staying above $g\qgen{C_1}$ and $gh$.
\end{proof}

To apply Lemma~\ref{lem:morse_lemma} we have to show that descending links are contractible. It follows from Observation~\ref{obs:up-down-link} that the descending link of a vertex decomposes as $\deslk v = \desulk v * \desdlk v$ into a descending up-link $\desulk v = \deslk v \cap \ulk v$ and a descending down-link $\desdlk v = \deslk v \cap \dlk v$. Since the join of a contractible complex with any complex is contractible, it suffices to show that one of the two join factors is contractible. We treat vertices of the form $g\qgen{\emptyset}$ and the other ones separately.

\begin{lem}
\label{lem:down-link}
Let $g\qgen{C}$ be a vertex of $\Complex$. If $C \ne \emptyset$ then the descending down-link is contractible.
\end{lem}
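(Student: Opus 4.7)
The plan is to identify $\desdlk v$ with a full $(|C|-1)$-simplex, which is then contractible. Using Observation~\ref{obs:up-down-link}, $\dlk v$ is the join $*_{s \in C}\qgen{s}$, in which the vertex $h_s \in \qgen{s}$ of the $s$-factor corresponds to the down-edge from $g h_s \qgen{C \setminus \{s\}}$ to $v$.

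First I would pick out the descending vertices of $\dlk v$. The edge from $g h_s \qgen{C \setminus \{s\}}$ to $v$ is descending iff $f(v) > f(g h_s \qgen{C \setminus \{s\}})$; since the second coordinate of $f$ at the down-endpoint is $-|C|+1 > -|C|$, this reduces to $\max \len(g h_s \qgen{C \setminus \{s\}}) < M$, where $M := \max \len(g \qgen{C})$. A case analysis on the $s$-ending of $g$ shows that for every $h' \in \qgen{C \setminus \{s\}}$ the function $h_s \mapsto \len(g h' h_s)$ is uniquely minimized on $\qgen{s}$ at an element $h_s^*$ independent of $h'$: $h_s^* = s^{-k}$ when $g$ ends with $s^k$ for some $k \ne 0$ (in the $V_\fin$ case), $h_s^* = s$ when $g$ ends with $s^{-1}$ (in the $V_\inf$ case), and $h_s^* = 1$ otherwise. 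Independence of $h'$ follows because $h'$, being a product of elements commuting with $s$ (as $C$ is a clique in $\ExtGraph$), does not change the $s$-ending of $g$. Consequently $g h_s \qgen{C \setminus \{s\}}$ realizes the maximum $M$ exactly when $h_s \ne h_s^*$, so the unique descending vertex in the $s$-factor is $h_s^*$.

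Next I would extend the analysis to higher-dimensional simplices. A simplex $(h_s)_{s \in S}$ with $\emptyset \ne S \subseteq C$ corresponds to the cube $g\prod_{s \in S} h_s [\qgen{C \setminus S}, \qgen{C}]$ with top $v$, and is descending iff $v$ is the unique $f$-maximum on the cube. If some $h_{s_0} \ne h_{s_0}^*$ for $s_0 \in S$, the cube contains the vertex $g h_{s_0} \qgen{C \setminus \{s_0\}}$ (obtained by absorbing $h_s$ for $s \in S \setminus \{s_0\}$ into $\qgen{C \setminus \{s_0\}}$), with $f$-value $(M, -|C|+1) > (M, -|C|) = f(v)$, so $v$ is not the maximum. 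Conversely, if every $h_s = h_s^*$, then for any other cube-vertex indexed by $C \setminus S \subseteq C^* \subsetneq C$, pick $s_0 \in S \setminus C^*$ (nonempty since $C^* \ne C$); no max-length element of $g \qgen{C}$ can have $s_0$-component equal to $h_{s_0}^*$, so the subcoset corresponding to this vertex misses $M$, giving $f$-value $< f(v)$.

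Therefore $\desdlk v$ has exactly one vertex $h_s^*$ per factor of the join and every non-empty subset spans a simplex; i.e.\ it is the full $(|C|-1)$-simplex on the vertex set $\{h_s^* : s \in C\}$, which is contractible since $C \ne \emptyset$. The main technical obstacle is the uniform case analysis producing $h_s^*$: one must verify that the \emph{$s$-ending exponent} of $g$ is well-defined (so $h_s^*$ is unambiguous) and that right-multiplying $g$ by $h' \in \qgen{C \setminus \{s\}}$ does not alter it, both of which follow from the operations (i)--(iii) of Section~\ref{sec:basics} and the fact that all elements of $C$ commute.
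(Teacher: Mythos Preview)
Your proof is correct and follows essentially the same approach as the paper: identify $\dlk v$ with the join $*_{s\in C}\qgen{s}$, show that each factor contributes exactly one descending vertex $h_s^*$, and conclude that $\desdlk v$ is a full simplex. Your case analysis for $h_s^*$ agrees with the paper's description (``the $h\in\qgen{s}$ such that $g$ ends with $h^{-1}$, or $h=1$ otherwise''). The one place where you go beyond the paper is in explicitly verifying that higher-dimensional simplices on the vertices $\{h_s^*\}$ are descending and that no other simplices are---the paper simply asserts that the descending down-link is a join of singletons, tacitly using that $\desdlk v$ is the full subcomplex of the join on its descending vertices. Your containment argument $w_{S'}\subseteq gh_{s_0}\qgen{C\setminus\{s_0\}}$ is exactly what is needed to justify that step.
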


\begin{proof}
Let $s \in C$. The vertices corresponding to the join factor $\qgen{s}$ in Observation~\ref{obs:up-down-link}
point to the vertices $gh \qgen{C \setminus \{s\}}, h \in \qgen{s}$. Such a vertex is ascending unless $\max \len(gh \qgen{C \setminus \{s\}}) < \max \len(g \qgen{C}$). This is the case precisely for the element $h \in \qgen{s}$ such that $g$ ends with $h^{-1}$ if such an element exists and is the case for $h = 1$ otherwise. In either case it is a single point. Thus the descending down-link is a join of singleton sets, that is, a simplex.
\end{proof}

\begin{lem}
\label{lem:up-link}
The descending up-link of a vertex of the form $g\qgen{\emptyset}$ is contractible provided $g \ne 1$.
\end{lem}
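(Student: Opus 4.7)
The plan is to identify the descending up-link with a single simplex, from which contractibility is immediate. Under the isomorphism $\ulk g\qgen{\emptyset} \cong \ExtFlag$ from Observation~\ref{obs:up-down-link}, a simplex of the up-link corresponds to a non-empty clique $C$ of $\ExtGraph$. Since $1 \in \qgen{C'}$ for every $C' \subseteq C$, one always has $\max \len(g\qgen{C'}) \ge \len(g)$, so the strict inequality $f(g\qgen{C'}) < f(g\qgen{\emptyset}) = (\len(g), 0)$ demanded for every non-empty $C' \subseteq C$ collapses to the joint condition
\[
\len(gh) \le \len(g) \quad \text{for all } h \in \qgen{C},
\]
the secondary coordinate $-\#C' < 0$ supplying the needed strictness.

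I will then show that this joint condition is equivalent to $C \subseteq E(g)$, where
\[
E(g) \defeq \{s \in \ExtVert \mid \len(gh) \le \len(g) \text{ for all } h \in \qgen{s}\}.
\]
The implication ``$C$ descending $\Rightarrow C \subseteq E(g)$'' is immediate from $\qgen{s} \subseteq \qgen{C}$. Unwinding the definitions, $s \in E(g) \cap V_\fin$ iff $g$ ends with $s$ in the sense of Section~\ref{sec:basics}, and $s = t^\epsilon \in E(g) \cap V_\inf^\pm$ iff some reduced word of $g$ ends in $t^e$ with $\operatorname{sgn}(e) = -\epsilon$. In particular every element of $E(g)$ has an underlying $V$-vertex that ends $g$, so by Corollary~\ref{cor:endletters_commute} these underlying vertices pairwise commute in $\Gamma$. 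A short case analysis, using that $t$ and $t^{-1}$ are never adjacent in $\ExtGraph$ when $t \in V_\inf$, then shows that $E(g)$ itself is a clique of $\ExtGraph$, and $E(g) \ne \emptyset$ whenever $g \ne 1$: any reduced word of $g$ ends with some letter $t^e$ contributing either $t \in V_\fin$ or $t^{-\operatorname{sgn}(e)} \in V_\inf^\pm$ to $E(g)$.

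The substantive step is the converse ``$C \subseteq E(g) \Rightarrow C$ descending''. For this I would apply Corollary~\ref{cor:green_strong} to rearrange a reduced word for $g$ so that the ending letters labelled by $C$ appear together at the end, writing $g = g' \cdot \prod_{s \in C} t(s)^{e(s)}$ reduced, with $g'$ ending in none of the involved $V$-vertices. For any $h = \prod_{s \in C} h_s$ with $h_s \in \qgen{s}$ the commuting $h_s$ combine with the tail factor by factor: in each $V_\fin$ position the existing tail exponent is modified or cancels, and in each $V_\inf^\pm$ position the sign of the tail exponent $e(s)$ is, by membership of $s$ in $E(g)$, opposite to that of any nontrivial increment from $h_s$, so the absolute value can only decrease. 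Hence $\len(gh) \le \len(g)$, identifying the descending up-link with the simplex on $E(g)$, which is non-empty and thus contractible. The delicate point is precisely this sign bookkeeping in the infinite-order factors, where the asymmetry between $t$ and $t^{-1}$ in $\ExtGraph$ is essential.
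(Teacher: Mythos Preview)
Your proof is correct and follows essentially the same approach as the paper's: both identify the descending up-link with the full simplex on the set of letters in $\ExtVert$ that $g$ ``ends with,'' invoking Corollary~\ref{cor:endletters_commute} to see this set is a clique. Your version is more explicit than the paper's about what ``ends with'' should mean for $s \in V_\inf^\pm$ (namely the sign condition $\operatorname{sgn}(e)=-\epsilon$) and about why the converse implication $C\subseteq E(g)\Rightarrow \max\len(g\qgen{C})=\len(g)$ holds, points the paper's two-sentence proof leaves to the reader.
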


\begin{proof}
A vertex of the form $g\qgen{C}$ is descending for $g\qgen{\emptyset}$ unless $\max \len(g\qgen{C}) > \len(g)$, that is, unless $C$ contains a letter that $g$ does not end with. In other words $g\qgen{C}$ is descending if $C$ contains only letters that $g$ ends with. The poset of such $g\qgen{C}$ is a barycentrically subdivided simplex by Corollary~\ref{cor:endletters_commute} (unless $g = 1$ in which case it is empty).
\end{proof}

\begin{proof}[Proof of Theorem~\ref{thm:contractible}]
The sublevel set $\Complex^{\le (0,0)}$ contains vertices $g\qgen{C}$ whose longest elements have length $0$. The only possibility for this is $\{1\}$, so $\Complex^{\le (0,0)}$ consists of a single vertex and in particular is contractible. Using that the descending link of every vertex is contractible by Lemmas~\ref{lem:down-link} and \ref{lem:up-link}, an inductive application of Lemma~\ref{lem:morse_lemma} shows that every sublevel set of $\Complex$ is contractible. But the whole complex is the limit of these, thus contractible as well.
\end{proof}

From Theorems~\ref{thm:gromov}, \ref{thm:cartan-hadamard} and \ref{thm:contractible} and Corollary~\ref{cor:flag} we get:

\begin{cor}
\label{cor:cat0}
$\Complex$ is $\CAT(0)$.
\end{cor}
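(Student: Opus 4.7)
The plan is to assemble Corollary~\ref{cor:cat0} directly from the four ingredients flagged in the sentence immediately preceding it. The logical shape is: Corollary~\ref{cor:flag}~$+$~Theorem~\ref{thm:gromov} give non-positive curvature, Theorem~\ref{thm:contractible} supplies simple connectivity, and Theorem~\ref{thm:cartan-hadamard} then promotes this to global $\CAT(0)$. So there is essentially no new content to prove; the task is just to check that the hypotheses of the two cited general theorems actually hold for $\Complex$.

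First I would verify the hypotheses of Theorem~\ref{thm:gromov}. The theorem requires $\Complex$ to be a finite dimensional cubical complex with flag vertex links. The cubes of $\Complex$ are indexed by intervals $gh[\qgen{C_1},\qgen{C_2}]$ arising from cliques of $\ExtGraph$, so the dimension of $\Complex$ is bounded by the clique number of $\ExtGraph$, which is finite because $\Graph$ is finite. Flagness of each vertex link is exactly Corollary~\ref{cor:flag}. Theorem~\ref{thm:gromov} then gives that $\Complex$ is non-positively curved.

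Next I would check the hypotheses of Theorem~\ref{thm:cartan-hadamard}. Simple connectivity is immediate from Theorem~\ref{thm:contractible}. Connectedness is clear. Completeness of the induced length metric holds because $\Complex$ is a cubical complex with only finitely many isometry types of cubes (in fact, all cubes are unit Euclidean cubes), so by a standard result (see \cite[Theorem~I.7.19]{brihae} type results quoted in the cited text) the piecewise Euclidean metric on $\Complex$ is complete. Theorem~\ref{thm:cartan-hadamard} then yields that $\Complex$ is $\CAT(0)$.

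The only point that requires a moment's thought is the finite-dimensionality/completeness bookkeeping; everything else is a direct citation. I would write the proof as essentially a one-sentence combination of the four results, perhaps expanding it to note that finite dimensionality follows from finiteness of $\Graph$ and that the unit cube structure provides completeness.
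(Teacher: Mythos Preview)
Your proposal is correct and matches the paper's own argument exactly: the corollary is simply the combination of Theorem~\ref{thm:gromov}, Theorem~\ref{thm:cartan-hadamard}, Theorem~\ref{thm:contractible}, and Corollary~\ref{cor:flag}, with no additional content. Your extra remarks on finite-dimensionality (via the clique number of $\ExtGraph$) and completeness (finitely many shapes of cubes) are welcome hypothesis-checks that the paper leaves implicit.
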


Together with Observation~\ref{obs:special_action} we can also conclude:

\begin{cor}
$G$ is virtually cocompact special.
\end{cor}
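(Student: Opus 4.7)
The plan is to combine the special, proper, cocompact action of $G$ on the CAT(0) cube complex $\Complex$ (established in Observation~\ref{obs:special_action} and Corollary~\ref{cor:cat0}) with residual finiteness of $G$ to extract a torsion-free finite-index subgroup $H \le G$. Then $H$ will act freely, cocompactly, and specially on $\Complex$, so the quotient $H \backslash \Complex$ will be a compact special cube complex, witnessing that $H$ (and hence $G$ virtually) is cocompact special.

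First I would argue that $G$ is residually finite. Each vertex group is a cyclic group, hence residually finite, and by Green's theorem \cite{green90} any graph product of residually finite groups is residually finite. Next, cocompactness of the $G$-action implies that there are only finitely many $G$-orbits of cells of $\Complex$, and properness implies that each cell stabilizer is finite. Let $F_1, \ldots, F_n \le G$ be representatives of the finitely many conjugacy classes of nontrivial cell stabilizers; each is a finite subset of $G$. By residual finiteness, for each nontrivial element of $\bigcup_i F_i$ there is a finite-index normal subgroup of $G$ avoiding it; intersecting these gives a finite-index normal subgroup $H \triangleleft G$ with $H \cap F_i = \{1\}$ for all $i$.

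Then every cell stabilizer for the induced $H$-action on $\Complex$ is conjugate in $G$ into some $F_i$ and intersects $H$ trivially (since $H$ is normal), so $H$ acts freely. Since $[G:H] < \infty$ and $G$ acts cocompactly, $H$ also acts cocompactly, and by Haglund--Wise \cite[Theorem~3.5]{hagwis10}, specialness of the $G$-action passes to the $H$-quotient. Thus $H \backslash \Complex$ is a compact special cube complex, proving $G$ is virtually cocompact special.

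The main subtlety is ensuring that a single finite-index subgroup simultaneously avoids all torsion; this is exactly where we need both the cocompactness of the action (to reduce to finitely many conjugacy classes) and residual finiteness of $G$. Once these are in hand the argument is a direct combination of the results already assembled in the paper and Haglund--Wise's inheritance theorem for special actions.
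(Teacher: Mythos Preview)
Your argument is correct, but the paper takes a more direct route to the torsion-free finite-index subgroup. Instead of appealing to Green's residual finiteness theorem and a Selberg-type argument, the paper observes (using the $\CAT(0)$ fixed-point theorem \cite[Corollary~II.2.8(1)]{brihae}) that every torsion element of $G$ stabilizes a cell of $\Complex$ and is therefore conjugate into some $\gen{C}$ with $C \subseteq V_\fin$; hence every nontrivial torsion element survives under the explicit retraction $G \to \prod_{s \in V_\fin}\gen{s}$, and the kernel $K$ of this map is the desired torsion-free finite-index subgroup. Your approach has the advantage of being a general template (it works verbatim for any residually finite group with a proper, cocompact, special action on a $\CAT(0)$ cube complex), while the paper's approach avoids importing the residual finiteness result and instead names the subgroup $K$ explicitly. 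Both finish the same way via \cite[Theorem~3.5]{hagwis10}.
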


\begin{proof}
By Corollary~\ref{cor:cat0} $\Complex$ is $\CAT(0)$ and hence special (see \cite[Example~3.3~(ii)]{hagwis08}).
Every torsion element in $G$ fixes a cell of $\Complex$ \cite[Corollary~II.2.8(1)]{brihae} and hence is conjugate to an element of $\gen{C}$ for some clique $C \subseteq V_\fin$. Thus any nontrivial torsion element is mapped to a nontrivial element under the projection $G \to \prod_{s \in V_\fin} \gen{s}$. So the kernel $K$ of this map acts freely and cocompactly on $\Complex$. The quotient $K \backslash \Complex$ is special by Observation~\ref{obs:special_action}.
\end{proof}

\section{Comparison to Davis--Januszkiewicz}
\label{sec:comparison}

When all vertices of $\Gamma$ are labeled $\infty$, then $\GProd\Gamma$ is a right-angled Artin group. For that case Davis and Januszkiewicz constructed graphs $\Gamma'$ and $\Gamma''$, which in our notation would have every vertex labeled $2$. They showed that $\GProd\Gamma$ embeds as a finite index subgroup into $\GProd{\Gamma''}$ which in turn acts on the Coxeter complex of $\GProd{\Gamma'}$. We want to explain how their construction carries over to graph products of general cyclic groups and how it relates to the construction from Section~\ref{sec:complex}.

The graph $\Gamma'$ has vertices $V_\fin \cup (V_\inf \times \{-1,1\})$ and edges are given by pulling back the edges from $\Gamma$ via the obvious projection to $V$. The graph $\Gamma''$ has vertices $V_\fin \cup (V_\inf \times \{0,1\})$ and the following edges: the subgraph on $V_\fin \cup (V_\inf \times \{1\})$ is canonically isomorphic to $\Gamma$; and a vertex $(v,0)$ is connected to every other vertex except for $(v,1)$. Each of the vertices in $V_\fin$ keeps its label and the vertices in $(V_\inf \times \{-1,0,1\})$ are labeled by $2$. For a vertex $i \in V_\inf$ Davis--Januzskiewicz denote these elements $g_i = i$, $s_i = (i,1)$, $t_i=(i,-1)$, and $r_i=(i,0)$ respectively.

The graph products associated to these groups are related via the maps
\begin{align*}
\beta \colon \GProd\Gamma &\to \GProd{\Gamma''}\\
s &\mapsto s\text{ ,} \quad s \in V_\fin\\
s &\mapsto (s,1) \cdot (s,0)\text{ ,}\quad s \in V_\inf
\end{align*}
and
\begin{align*}
\alpha \colon \GProd{\Gamma'} &\to \GProd{\Gamma''}\\
s &\mapsto s\text{ ,} \quad s \in V_\fin\\
(s,1) &\mapsto (s,1)\text{ ,} \quad s \in V_\inf\\
(s,-1) & \mapsto (s,0) \cdot (s,1) \cdot (s,0)\text{ ,} \quad s \in V_\inf
\end{align*}
which are easily seen to be injective. In fact, letting $E$ denote the subgroup of $\GProd{\Graph''}$ generated by the elements $(s,0), s \in V_\inf$ we see that $\GProd{\Graph''}$ can be written as semidirect products
\begin{equation}
\label{eq:semidirect_product}
\GProd\Graph \rtimes E = \GProd{\Graph''} = \GProd{\Graph'} \rtimes E
\end{equation}
where the action is always trivial on $V_\fin$ and on the remaining generators is given by
\[
s^{(t,0)} = \left\{
\begin{array}{ll}
s&s \ne t\\
s^{-1}& s = t
\end{array}
\right.
\quad
\text{respectively}\quad
(s,\pm 1)^{(t,0)} =
\left\{
\begin{array}{ll}
(s,\pm1)&s \ne t\\
(s,\mp 1)& s = t\text{ .}
\end{array}
\right.
\]
This can be seen for example by writing
\begin{align*}
\GProd{\Gamma''} = \langle V \cup (V_\inf \times \{-1,0,1\}) \mid{} &\text{all the previous relations},\\
&s=(s,1) \cdot (s,0),\text{ for } s \in V_\inf,\\
&(s,-1) = (s,0)\cdot (s,1) \cdot (s,0)\text{ for } s \in V_\inf\rangle
\end{align*}
and then applying Tietze transformations to remove generators. These algebraic considerations play the role of the geometric arguments in \cite{davjan00}. One of the main ingredients here is that the groups $\GProd\Graph$ and $\GProd{\Graph'}$ do indeed admit the described actions of the group $E$. The basic example to keep in mind is the following.

\begin{exmpl}
If $\Graph$ has just one vertex labeled $\infty$, then $X(\Graph)$ can be thought of as the real line. Then $\GProd{\Graph}$ is the group generated by an element $s$ which is translation by $2$. The group $\GProd{\Graph'}$ is generated by elements $(s,1)$ and $(s,-1)$ which are reflection at $1$ and $-1$ respectively. Finally, $\GProd{\Graph''}$ is generated by elements $(s,1)$ and $(s,0)$ which are reflection at $1$ and $0$.
\end{exmpl}

It is clear from the description that $\Gamma'$ is isomorphic to the graph $\ExtGraph$ from Section~\ref{sec:complex}. Therefore $\Complex(\Gamma')$ is isomorphic to $\Complex(\ExtGraph)$. We will show below that they are also isomorphic to $\Complex(\Gamma)$. Note that $\Complex(\Gamma'')$ is not typically homeomorphic to these complexes. Indeed if $\Gamma$ consists of two vertices at least one of which is labeled $\infty$, then $\Complex(\Gamma'')$ is $2$-dimensional while $\Complex(\Gamma')$ and $\Complex(\Gamma')$ are $1$-dimensional. The importance of $\Gamma''$ lies not so much in the complex $\Complex(\Gamma'')$ but rather in the group $\GProd{\Gamma''}$.

To show that $\GProd{\Graph''}$ acts on $\Complex(\Graph)$ and $\Complex(\Graph')$ and that both are equivariantly isomorphic, we define a third complex $\WeirdComplex$, that is a coset complex of $\GProd{\Graph''}$. Recall that $\Complex(\Graph)$ is the coset complex of sets of the form $\gen{s}, s \in V_\fin$ and $\qgen{s},\qgen{s^{-1}}, s \in V_\inf$ where $s \in V$, while $\Complex(\Graph')$ is the coset complex of subgroups of the form $\gen{s},s \in V_\fin$ and $\gen{(s,1)}, \gen{(s,-1)}, s \in V_\inf$. The construction of $\WeirdComplex$ is based on the observation that
\begin{equation}
\label{eq:weirds}
\qgen{s}E = \gen{(s,1)}E \quad \text{and}\quad \qgen{s^{-1}}E = \gen{(s,-1)}E
\end{equation}
in $\GProd{\Graph''}$ for $s \in V_\inf$ (this follows from the formulas $s=(s,1) \cdot (s,0)$ and $s^{-1}=(s,-1)\cdot (s,0)$ for $s \in V_\inf$). We therefore define $\Weirds$ to be the poset of sets $\gen{s}E,s \in V_\fin$ as well as those in \eqref{eq:weirds}. Further, $\WeirdCosets$ is defined to be the poset $\GProd{\Graph''}\Weirds$ of cosets of these sets and $\WeirdComplex$ to be the realization of $\WeirdCosets$.

\begin{prop}
The maps $g\qgen{s} \mapsto \alpha(g\qgen{s})E$ and $g\gen{(s,\pm1)} \mapsto \beta(g\gen{s,\pm1})E$ induce ($\GProd{\Graph}$- respectively $\GProd{\Graph'}$-) equivariant isomorphisms $\Complex(\Graph) \to \WeirdComplex$ respectively $\Complex(\Graph') \to \WeirdComplex$. In particular $\GProd{\Graph''}$ acts on $\Complex(\Graph)$ and $\Complex(\Graph')$ and they are equivariantly isomorphic.
\end{prop}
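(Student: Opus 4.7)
The plan is to exhibit an equivariant isomorphism at the poset level, since all three complexes are order complexes of posets of cosets. I focus on the map $\Complex(\Graph) \to \WeirdComplex$; the map $\Complex(\Graph') \to \WeirdComplex$ is parallel, with $\alpha$ in place of $\beta$ and the decomposition $\GProd{\Graph''} = \GProd{\Graph'} \rtimes E$ in place of $\GProd{\Graph''} = \GProd\Graph \rtimes E$.

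First I extend the given assignment to an arbitrary clique $C$ of $\ExtGraph$ by the rule $g\qgen{C} \mapsto \beta(g)\qgen{C}E$, where by $\beta(g)\qgen{C}E$ I mean the subset $\bigcup_{h \in \qgen{C}}\beta(gh)E$ of $\GProd{\Graph''}$. Using $\beta(s) = (s,1)(s,0)$ together with $(s,0) \in E$, the identities \eqref{eq:weirds} propagate multiplicatively to give $\beta(\qgen{C})E = \prod_{s^\varepsilon \in C}\gen{(s,\varepsilon)}E$; the factors commute since $C$ pulls back from a clique of $\Graph$, so the product is unambiguous and lies in the clique-extended version of $\Weirds$.

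The main engine is the decomposition \eqref{eq:semidirect_product}: every $h \in \GProd{\Graph''}$ has a unique factorisation $h = \beta(g)e$ with $g \in \GProd\Graph$ and $e \in E$. This yields a set-theoretic retraction $q \colon \GProd{\Graph''} \to \GProd\Graph$, $\beta(g)e \mapsto g$, which (while not a group homomorphism) restricts to a bijection from the right $E$-saturated subsets of $\GProd{\Graph''}$ onto their $q$-images in $\GProd\Graph$. Since $\beta(g)\qgen{C}E$ is by construction a union of right $E$-cosets, $q$ sends it bijectively onto $g\qgen{C}$. This single observation proves well-definedness, injectivity, and order preservation in both directions at once. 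For surjectivity, take any $hP \in \WeirdCosets$ with $P = \gen{(s,\varepsilon)}E$ and write $h = \beta(g)e$; since $E$ is abelian, generated by commuting involutions $(t,0)$, and since every $(t,0)$ with $t \ne s$ commutes with $(s,\varepsilon)$, conjugation of $\gen{(s,\varepsilon)}$ by $e$ reduces to the action of $(s,0)^{\varepsilon_s}$, which either preserves $\gen{(s,\varepsilon)}$ or sends it to $\gen{(s,-\varepsilon)}$. In either case $eP \in \Weirds$ and $hP = \beta(g) \cdot eP$ is in the image; the same argument works at the clique level.

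Equivariance is immediate, as left multiplication by $g \in \GProd\Graph$ on $\CliqueCosets$ corresponds via the map to left multiplication by $\beta(g)$ on $\WeirdCosets$. The main obstacle is not any single step but the bookkeeping needed to track sign flips under $E$-conjugation and to check that the single-vertex identifications \eqref{eq:weirds} extend compatibly to products over cliques.
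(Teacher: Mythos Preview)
Your proof is correct and follows the same line as the paper's own (three-sentence) argument, which simply invokes the semidirect product decomposition \eqref{eq:semidirect_product} for bijectivity and notes that equivariance and order-preservation are immediate; you have filled in exactly the details the paper omits, in particular the extension to cliques and the conjugation computation for surjectivity. Note also that you have silently (and correctly) swapped $\alpha$ and $\beta$ relative to the statement: since $\beta \colon \GProd\Graph \to \GProd{\Graph''}$ and $\alpha \colon \GProd{\Graph'} \to \GProd{\Graph''}$, the map $\Complex(\Graph) \to \WeirdComplex$ must use $\beta$ and $\Complex(\Graph') \to \WeirdComplex$ must use $\alpha$, as you have it.
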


\begin{proof}
Bijectivity of both maps follows from the semidirect product decompositions \eqref{eq:semidirect_product}. Equivariance is clear by construction. The order is preserved since it is just inclusion.
\end{proof}

\providecommand{\bysame}{\leavevmode\hbox to3em{\hrulefill}\thinspace}
\providecommand{\MR}{\relax\ifhmode\unskip\space\fi MR }
\providecommand{\MRhref}[2]{%
  \href{http://www.ams.org/mathscinet-getitem?mr=#1}{#2}
}
\providecommand{\href}[2]{#2}

\end{document}